\documentclass[12pt,a4paper]{amsart}

\usepackage{amsmath,amssymb,amsthm}
\usepackage{mathtools}
\usepackage{enumitem}
\usepackage{xcolor}
\usepackage{hyperref}
\hypersetup{
  colorlinks=true,
  linkcolor={blue!60!black},
  citecolor={blue!60!black},
  urlcolor={blue!60!black}
}

\newtheorem{theorem}{Theorem}[section]
\newtheorem{lemma}[theorem]{Lemma}
\newtheorem{proposition}[theorem]{Proposition}
\newtheorem{corollary}[theorem]{Corollary}

\theoremstyle{definition}
\newtheorem{definition}[theorem]{Definition}
\newtheorem{example}[theorem]{Example}
\newtheorem{remark}[theorem]{Remark}

\newcommand{\bul}{\bullet}
\newcommand{\ld}{\cdot}
\newcommand{\lam}{\lambda}
\newcommand{\Aut}{\operatorname{Aut}}
\newcommand{\Fix}{\operatorname{Fix}}
\newcommand{\Ker}{\ker}

\newcommand{\raisedstar}{\texorpdfstring{\raisebox{.2ex}{$\star$}}{*}}

\title{Associativity of the \raisedstar-product of skew braces}
\author{Andrea Sciandra}
\date{}
\address{%
\parbox[b]{0.9\linewidth}{Département de Mathématiques, Université Libre de Bruxelles, Boulevard du Triomphe, B-1050
 Bruxelles, Belgium.}}
\email{andrea.sciandra@ulb.be}
\urladdr{\url{www.andreasciandra.com}}

\keywords{Skew braces, star-product, two-sided skew braces, radical rings, nilpotency, central series, bi-skew braces.}

\subjclass[2020]{Primary 20N99, 16T25; Secondary 20F14, 20F18, 16N20}

\begin{document}

\begin{abstract}
For braces, associativity of the $\star$-product is equivalent to two-sidedness. We extend this result to skew braces by proving that any two of the following conditions imply the third: two-sidedness, associativity of the $\star$-product, and centrality of $A\star A$ in the additive group. We then study two-sided skew braces with associative $\star$-product through the radical ring associated with their additive abelianization. This yields a canonical fibre-product description modulo an annihilator ideal, exact sequences relating higher $\star$-products to the powers of the radical ring, coincidence of the left, right and strong $\star$-series, and sharp bounds on their nilpotency indices. We also obtain an explicit description of the lower central series, classify the structures whose additive abelianization is infinite cyclic, and characterize when they are bi-skew.
\end{abstract}

\maketitle

\tableofcontents

\section{Introduction}
It is known that the $\star$-product of a brace provides a natural link with radical rings: associativity of this operation is equivalent to two-sidedness \cite{Rump,Lau}. For skew braces the situation is substantially different. The additive group need not be abelian, and in general neither associativity of $\star$ nor two-sidedness implies the other. The main result of this paper (Theorem \ref{thm:extension}) identifies precisely the additional condition which restores the classical equivalence. More precisely, we prove that, for a skew brace $(A,\cdot,\bullet)$, any two of the following conditions imply the third:
\[
A\text{ is two-sided},\qquad
\star\text{ is associative},\qquad
A\star A\subseteq Z(A^\cdot).
\]
Thus, centrality of the subgroup generated by the $\star$-products is exactly the missing ingredient in passing from braces to skew braces.

A key step in the proof is the analysis of almost trivial skew braces. If $A=(G,\cdot,\cdot^{\mathrm{op}})$, then $a\star b=[a,b^{-1}]$ and $\star$ is associative if and only if $G$ is nilpotent of class at most two (Proposition \ref{prop:quasi}). This also gives simple examples showing that two-sidedness alone does not force associativity. In the opposite direction, we give a general construction, starting from an index-two subgroup and an involutive automorphism, which produces skew braces with associative $\star$-product that need not be two-sided (Proposition \ref{prop:index-two}). In particular, the three conditions in the main theorem genuinely capture phenomena which are absent in the classical brace setting. 

Section \ref{sec:applications} is devoted to applications for two-sided skew braces with associative $\star$-product. In this setting $A\star A\subseteq Z(A^\cdot)$, so the $\star$-product is distributive in both variables with respect to the additive group operation. Then $D=[A^\cdot,A^\cdot]$ is annihilated on both sides by $\star$, and the quotient $R=A/D$ inherits a canonical Jacobson radical ring structure. We obtain a structural description of $A$ modulo an annihilator ideal: writing $B=A\star A$ and $I=B\cap D$, we obtain a canonical fibre-product skew brace decomposition 
\[
A/I\cong A/B\times_{A/(BD)}R,
\]
with $I\subseteq\operatorname{Ann}(A)$ (Proposition \ref{app:fibre-product}). We then study the subgroups $P_n(A)$ generated by $\star$-products of length $n$. They coincide with the left, right and strong $\star$-series and fit into exact sequences relating them to the powers $R^n$ (Proposition \ref{prop:powers}), which yield sharp bounds on the corresponding nilpotency indices. We also obtain an explicit formula for the lower central series,
\[
\Gamma_n(A)=\gamma_n(A^\cdot)\cdot P_n(A)\]
which separates the group-theoretic and ring-theoretic contributions to central nilpotence (Theorem \ref{app:lower-central}). Finally, when $A^\cdot/[A^\cdot,A^\cdot]\cong\mathbb Z$, we classify all two-sided skew brace structures with associative $\star$-product in terms of the subgroup $Z(A^\cdot)\cap[A^\cdot,A^\cdot]$ (Proposition \ref{app:cyclic-abelianization}). These structures satisfy $P_3(A)={1}$, a condition which, under the standing assumptions of the section, is proven to be equivalent to $A$ being bi-skew (Corollary \ref{app:bi-skew}).

\section{Preliminaries}

First, we recall some preliminary notions and results that will be useful in the following.

\begin{definition}[{\cite{GV}}]
A \emph{skew brace} is a triple $(A, \ld, \bul)$ where $A^{\ld}:=(A,\ld,1)$ and
$A^{\bul}:=(A,\bul,1)$ are both groups such that
\begin{equation}\label{eq:brace-axiom}
  a \bul (b \ld c) \;=\; (a \bul b) \ld a^{-1} \ld (a \bul c)
  \qquad \text{for all}\, a,b,c \in A,
\end{equation}
where $a^{-1}$ denotes the inverse of $a$ in $A^{\ld}$. The inverse of $a$ in $A^{\bul}$ will be denoted by $\overline{a}$.
\end{definition}
A skew brace $(A,\cdot,\bullet)$ is a \textit{brace} if $A^{\ld}$ is abelian \cite{Rump}. A skew brace is \emph{two-sided} if in addition
\begin{equation}\label{eq:twosided}
(a \ld b) \bul c = (a \bul c)\cdot c^{-1} \ld (b \bul c)
  \qquad \text{for all}\ a,b,c \in A.
\end{equation}
It is known that two-sided braces are in
bijective correspondence with radical rings \cite{Rump2}. Recall that a \textit{radical ring} is a ring $(R,+,\cdot)$ without identity such that for each $x\in R$ there exists $y\in R$ such that $x+y+xy=0$. Given a radical ring $(R,+,\cdot)$, one has that $(R,+,\bullet)$ is a two-sided brace where $a\bullet b:=a+ab+b$, for all $a,b\in R$. Conversely, if $(A,\cdot,\bullet)$ is a two-sided brace, the
operation $a\star b:=a^{-1}\cdot(a\bullet b)\cdot b^{-1}$, for all $a,b\in A$, makes $(A,\cdot,\star)$ into a radical ring.

We also recall that, given a skew brace $(A,\ld,\bul)$, there is a morphism of groups 
\[
\lambda:A^{\bul}\to\mathrm{Aut}(A^{\ld}),\quad a\longmapsto a^{-1}\cdot(a \bul b)
\]
as proven in \cite[Corollary 1.10]{GV}. In particular, $\lam_1=\mathrm{id}$ and $\lam_{a \bul b} = \lam_a\lam_b.$
For an arbitrary skew brace
$(A,\ld,\bul)$, one defines the $\star$-\textit{product} as above
\begin{equation}\label{eq:star}
  a \star b \;:=\; a^{-1} \ld (a \bul b) \ld b^{-1} \;=\; \lam_a(b) \ld b^{-1}.
\end{equation}
Thus, the operation $\star$ measures the distance between $\ld$ and $\bul$: $a \bul b \;=\; a \ld (a \star b) \ld b$. Observe that the identity element $1$ is a two-sided absorbing element for $\star$: indeed
$a \star 1 = \lam_a(1) \ld 1^{-1} = 1$ and $1 \star a = \lam_1(a)\ld a^{-1} = a \ld a^{-1} = 1$. The operation $\star$ has no identity element in general, as already observed in the brace setting.

From now on, given subsets $X,Y\subseteq A$, we set
\[
X\star Y:=\langle x\star y\mid x\in X,\ y\in Y\rangle_{\cdot},
\]
i.e.\ $X\star Y$ is the subgroup generated in $A^{\cdot}$ by elements $x\star y$, with $x\in X$ and $y\in Y$. An \emph{ideal} $I$ of a skew brace $(A,\cdot,\bullet)$ is a subgroup which is normal in both
underlying groups and such that $\lambda_a(I)\subseteq I$, for all $a\in A$. We use
\[
\Fix(\lambda):=\{x\in A\mid\lambda_a(x)=x\text{ for every }a\in A\},
\]
and write
$\operatorname{Ann}(A):=Z(A^{\cdot})\cap\Ker(\lambda)\cap\Fix(\lambda)$ for the \textit{annihilator} of the skew brace, which also coincides with $Z(A^{\cdot})\cap\mathrm{ker}(\lambda)\cap Z(A^{\bul})$.

The \emph{opposite skew brace} is
$A_{\mathrm{op}}=(A,\cdot_{\mathrm{op}},\bul)$, where
$a\cdot_{\mathrm{op}}b=b\cdot a$. Its $\star$-product is $a\star_{\mathrm{op}}b=b^{-1}\cdot(a\bul b)\cdot a^{-1}$. The ideals of $A$ and $A_{\mathrm{op}}$ coincide. In particular,
$A\star A$ and $A\star_{\mathrm{op}}A$ are ideals of $A$;
the corresponding quotients are the \textit{trivial} $(A/A\star A,\cdot,\cdot)$ and \textit{almost trivial} $(A/A\star_{\mathrm{op}}A,\cdot,\cdot^{\mathrm{op}})$ skew braces,
respectively, see \cite[Section~2]{Trappeniers}.

The following identities for the $\star$-product will be used throughout. They are collected, for example, in \cite[Lemma~2.7]{Tsang}.

\begin{lemma}\label{lem:fund}
Given a skew brace $(A,\ld,\bul)$, the following equalities hold for all $a,b,c \in A$:
\begin{enumerate}
   \item\label{L2} $a \star (b \ld c) = (a \star b) \ld b \ld (a \star c) \ld b^{-1}$,
  \item\label{L3} $(a \bul b) \star c
        = \bigl(a \star (b \star c)\bigr) \ld (b \star c) \ld (a \star c)$,
    \item $\lambda_{a}(b\star c)=(a\bullet b\bullet \overline{a})\star\lambda_{a}(c)$,
    \item $a\bullet b\bullet \overline{a}=a\ld \lam_{a}(b\cdot(b\star \overline{a}))\cdot a^{-1}$.
\end{enumerate}
\end{lemma}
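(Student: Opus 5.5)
We verify each identity directly from the definition $a\star b=\lam_a(b)\ld b^{-1}$ in \eqref{eq:star}, using only that $\lambda\colon A^{\bul}\to\Aut(A^{\ld})$ is a group morphism and the relation $a\bul b=a\ld\lam_a(b)$.

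For (1), we expand $a\star(b\ld c)=\lam_a(b)\ld\lam_a(c)\ld c^{-1}\ld b^{-1}$. We then insert $b^{-1}\ld b$ after $\lam_a(b)$, which gives $(\lam_a(b)\ld b^{-1})\ld b\ld(\lam_a(c)\ld c^{-1})\ld b^{-1}$. This is exactly $(a\star b)\ld b\ld(a\star c)\ld b^{-1}$.

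For (2), we write $\lam_{a\bul b}(c)\ld c^{-1}=\lam_a(\lam_b(c))\ld c^{-1}$. Next we substitute $\lam_b(c)=(b\star c)\ld c$ and apply (1) with $b\star c$ and $c$ in place of $b$ and $c$. This gives
\[
\lam_a\bigl((b\star c)\ld c\bigr)\ld c^{-1}=\lam_a(b\star c)\ld\lam_a(c)\ld c^{-1}.
\]
We rewrite $\lam_a(b\star c)=(a\star(b\star c))\ld(b\star c)$ and $\lam_a(c)\ld c^{-1}=a\star c$, and the product becomes the claimed expression.

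For (4), we first note $\lam_a(x)=a^{-1}\ld(a\bul x)$, so $a\bul x=a\ld\lam_a(x)$. Taking $x=b\bul\overline a$ gives $a\bul b\bul\overline a=a\ld\lam_a(b\bul\overline a)$. We then write $b\bul\overline a=b\ld(b\star\overline a)\ld\overline a$. Since $\overline a=\lam_a^{-1}(a^{-1})$ (from $1=a\bul\overline a=a\ld\lam_a(\overline a)$), applying $\lam_a$ turns the last factor into $a^{-1}$. This yields $a\ld\lam_a(b\ld(b\star\overline a))\ld a^{-1}$, as claimed.

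For (3), we compute the right-hand side $(a\bul b\bul\overline a)\star\lam_a(c)=\lam_{a\bul b\bul\overline a}(\lam_a(c))\ld\lam_a(c)^{-1}$. By the morphism property, $\lam_{a\bul b\bul\overline a}\lam_a=\lam_a\lam_b$. Hence the right-hand side equals $\lam_a(\lam_b(c))\ld\lam_a(c^{-1})=\lam_a(\lam_b(c)\ld c^{-1})=\lam_a(b\star c)$.

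None of these steps presents a real obstacle. The only points needing care are keeping the noncommutative order of factors in (1) and (2), and the identity $\lam_a(\overline a)=a^{-1}$ used in (4).
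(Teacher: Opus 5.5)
Your proposal is correct: all four identities follow exactly as you compute them from $a\star b=\lam_a(b)\ld b^{-1}$, $a\bul b=a\ld\lam_a(b)$, $\lam_{a\bul b}=\lam_a\lam_b$ and $\lam_a(\overline{a})=a^{-1}$. The paper does not prove this lemma itself and only cites \cite[Lemma~2.7]{Tsang}; your direct verification is the standard argument. One small inaccuracy: in (2) the displayed step uses only that $\lam_a$ is an automorphism of $A^{\ld}$, not an application of (1), although invoking (1) there would also give the same result.
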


For a brace, it is known that associativity of $\star$ is equivalent to two-sidedness.

\begin{theorem}[\cite{Rump,Lau}]\label{thm:rump-lau}
Given a brace $(A,\ld,\bul)$, the following are equivalent:
\begin{enumerate}[label=\textup{(\roman*)}]
  \item[1)]\label{rl:assoc} $\star$ is associative.
  \item[2)]\label{rl:twosided} $A$ is two-sided.
\end{enumerate}
\end{theorem}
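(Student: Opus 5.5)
We work throughout with $A^{\ld}$ abelian. In that case Lemma~\ref{lem:fund}(1) reduces to left distributivity,
\[
a\star(b\ld c)=(a\star b)\ld(a\star c).
\]
Lemma~\ref{lem:fund}(2) reduces to
\[
(a\bul b)\star c=(a\star(b\star c))\ld(b\star c)\ld(a\star c).
\]
Writing $a\bul b=a\ld(a\star b)\ld b$, the left-hand side of the second identity becomes $(a\ld(a\star b)\ld b)\star c$. So the whole comparison between associativity and two-sidedness comes down to right distributivity of $\star$ over $\ld$.

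First we record a reformulation of two-sidedness. Expanding both sides of \eqref{eq:twosided} with $x\bul c=x\ld(x\star c)\ld c$ and using commutativity, \eqref{eq:twosided} becomes
\[
(a\ld b)\star c=(a\star c)\ld(b\star c)\qquad\text{for all } a,b,c\in A.
\]
So $A$ is two-sided if and only if $\star$ is right distributive over $\ld$. This step is routine cancellation in an abelian group.

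For 2) $\Rightarrow$ 1), assume right distributivity. Apply it to $(a\ld(a\star b)\ld b)\star c$ to get
\[
(a\star c)\ld((a\star b)\star c)\ld(b\star c).
\]
Comparing with the reduced form of Lemma~\ref{lem:fund}(2) and cancelling $(a\star c)\ld(b\star c)$ gives $(a\star b)\star c=a\star(b\star c)$. This is exactly the classical computation for radical rings.

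For 1) $\Rightarrow$ 2), assume associativity. The same comparison, now using $(a\star b)\star c=a\star(b\star c)$, shows that
\[
(a\ld x\ld b)\star c=(a\star c)\ld(x\star c)\ld(b\star c),\qquad x=a\star b.
\]
So distributivity holds on triples of this special shape. The main obstacle is upgrading this to general right distributivity, since $x$ is not an arbitrary element.

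To handle it, set $\delta_c(u,v):=(u\ld v)\star c\ld(u\star c)^{-1}\ld(v\star c)^{-1}$ and aim to show $\delta_c\equiv 1$. We will use the following tools:
\begin{itemize}[label=--]
\item Lemma~\ref{lem:fund}(2) written with $\bul$, which says that $c\mapsto$ (right multiplication by $c$) intertwines $\bul$ with the affine action;
\item the fact that every element of $A$ is a $\bul$-product, since $A^{\bul}$ is a group;
\item the special case $b=\overline{a}$, for which $a\bul\overline{a}=1$ gives $a\ld(a\star\overline{a})\ld\overline{a}=1$.
\end{itemize}
The plan is to derive a cocycle-type identity for $\delta_c$ from associativity and left distributivity, namely $\delta_c(u,v)=\delta_{c}(u,v)$ twisted through $\lambda$. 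The target is an identity of the form $\lambda_a$-equivariance, $\delta_c(\lambda_a u,\lambda_a v)=\lambda_a\delta_c(u,v)$-like, which we then combine with the special triple identity to force $\delta_c=1$.

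If that direct route stalls, the fallback is to cite the known argument of \cite{Rump,Lau}. That argument passes to the ring $(A,\ld,\star)$ by first obtaining right distributivity via the substitution $x\mapsto \lambda$-twisted elements, which is the standard step.
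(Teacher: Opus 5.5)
Your reformulation of two-sidedness as right distributivity and your proof of 2)~$\Rightarrow$~1) are correct. They are the brace case of Lemma~\ref{lem:conditiontwosided} and Proposition~\ref{thm:starassocitiveskewbrace}, where $A\star A\subseteq Z(A^{\ld})$ holds automatically.

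The direction 1)~$\Rightarrow$~2), however, is not proved. As you observe, associativity plus Lemma~\ref{lem:fund}(2) only gives distributivity for products of the special shape $a\ld(a\star b)\ld b=a\bul b$. Nothing in your plan upgrades this:
\begin{itemize}
\item the ``cocycle identity'' $\delta_c(u,v)=\delta_c(u,v)$ is a tautology;
\item the $\lambda_a$-equivariance of $\delta_c$ is only guessed, with no indication of how it would follow (Lemma~\ref{lem:fund}(3) shows that $\lambda_a$ sends $u\star c$ to $(a\bul u\bul\overline a)\star\lambda_a(c)$, which changes both arguments);
\item no mechanism is given by which it would force $\delta_c=1$.
\end{itemize}
The fallback of citing \cite{Rump,Lau} amounts to citing the statement itself.

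The missing idea is a way to write a $\ld$-product with one \emph{arbitrary} factor as a $\bul$-product. That lets associativity and the homomorphism $\lambda\colon A^{\bul}\to\Aut(A^{\ld})$ be applied. The paper does this in Theorem~\ref{thm:seconddirection}, in two steps.

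First, the useful special case is $b=a^{-1}$, not $b=\overline a$. Since $a\bul a^{-1}=a\star a^{-1}$, Lemma~\ref{lem:fund}(2) and associativity give $(a\star a^{-1})\star c=((a\star a^{-1})\star c)\ld(a^{-1}\star c)\ld(a\star c)$. Hence $a^{-1}\star c=(a\star c)^{-1}$.

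Second, one has $(\overline a\bul b^{-1})\ld b=\overline a\bul(a\star b)$, which is immediate from $\lambda_{\overline a}\lambda_a=\mathrm{id}$ and commutativity. Combine this with two facts:
\begin{itemize}
\item $\lambda_{a\star b}(c)=(a\star(b\star c))\ld c=\lambda_a(b\star c)\ld(b\star c)^{-1}\ld c$, by associativity;
\item $\lambda_{b^{-1}}(c)=(b\star c)^{-1}\ld c$, by the first step.
\end{itemize}
This yields
\[
\lambda_{(\overline a\bul b^{-1})\ld b}(c)=\lambda_{\overline a}\lambda_{a\star b}(c)=(b\star c)\ld\lambda_{\overline a}(b\star c)^{-1}\ld\lambda_{\overline a}(c)=(b\star c)\ld\lambda_{\overline a\bul b^{-1}}(c),
\]
that is, $((\overline a\bul b^{-1})\ld b)\star c=((\overline a\bul b^{-1})\star c)\ld(b\star c)$. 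For fixed $b$, the element $\overline a\bul b^{-1}$ runs over all of $A$ as $a$ varies. So this is full right distributivity, and hence two-sidedness.
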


In the next section, we extend this result beyond the brace setting.

\section{Two-sidedness and associativity of the $\star$-product}

First, we observe the following straightforward result.

\begin{lemma}\label{lem:conditiontwosided}
A skew brace $(A,\cdot,\bullet)$ is two-sided if and only if 
\begin{equation}\label{eqcondtwosided}
(a\cdot b)\star c=b^{-1}\cdot(a\star c)\cdot b\cdot(b\star c), \qquad \text{for all}\ a,b,c\in A.
\end{equation}
\end{lemma}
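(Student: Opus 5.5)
We simply rewrite both sides of \eqref{eq:twosided} using the decomposition $x\bul y=x\ld(x\star y)\ld y$ from \eqref{eq:star}; the equivalence should then be a matter of cancellation in $A^{\ld}$, with no appeal to Lemma \ref{lem:fund}.

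First expand the left-hand side of \eqref{eq:twosided}:
\[
(a\ld b)\bul c=a\ld b\ld\bigl((a\ld b)\star c\bigr)\ld c .
\]
Next expand the right-hand side, applying the decomposition to $a\bul c$ and $b\bul c$:
\[
(a\bul c)\ld c^{-1}\ld(b\bul c)
= a\ld(a\star c)\ld c\ld c^{-1}\ld b\ld(b\star c)\ld c
= a\ld(a\star c)\ld b\ld(b\star c)\ld c .
\]
Hence \eqref{eq:twosided} holds for given $a,b,c$ if and only if
\[
a\ld b\ld\bigl((a\ld b)\star c\bigr)\ld c = a\ld(a\star c)\ld b\ld(b\star c)\ld c .
\]
Cancelling $a$ on the left and $c$ on the right, and then multiplying on the left by $b^{-1}$, this is equivalent to
\[
(a\ld b)\star c=b^{-1}\ld(a\star c)\ld b\ld(b\star c),
\]
which is exactly \eqref{eqcondtwosided}. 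Each step is reversible: we only use the identity $x\bul y=x\ld(x\star y)\ld y$ and cancellation in the group $A^{\ld}$. So the two conditions are equivalent pointwise for every triple $(a,b,c)$, hence equivalent as universally quantified statements.

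There is no real obstacle here. The only point needing care is the order of factors: $A^{\ld}$ need not be abelian, so the conjugation by $b$ in \eqref{eqcondtwosided} has to be tracked correctly and not simplified away.
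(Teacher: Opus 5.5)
Your proposal is correct and is essentially the paper's argument. The paper writes $(a\ld b)\bul c=a\ld b\ld\lam_{a\ld b}(c)$ and $(a\bul c)\ld c^{-1}\ld(b\bul c)=a\ld\lam_a(c)\ld c^{-1}\ld b\ld\lam_b(c)$, substitutes $\lam_x(y)=(x\star y)\ld y$ (equivalent to your decomposition $x\bul y=x\ld(x\star y)\ld y$), and cancels $a$ on the left, $c$ on the right and $b$ as you do.
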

\begin{proof}
Condition \eqref{eq:twosided} reads $a \cdot b\cdot\lambda_{a\cdot b}(c)=a\cdot\lambda_{a}(c)\cdot c^{-1}\cdot b\cdot\lambda_{b}(c)$, which is equivalent to $b\cdot((a\cdot b)\star c)\cdot c=(a\star c)\cdot b\cdot(b\star c)\cdot c$, hence to \eqref{eqcondtwosided}.
\end{proof}

\begin{remark}\label{rmk:twosidedditr}
If $A\star A\subseteq Z(A^{\cdot})$, then $A$ is two-sided if and only if $\star$ is right-distributive with respect to $\cdot$. This is the case for braces, where the condition  $A\star A\subseteq Z(A^{\cdot})$ is trivially satisfied.
\end{remark}

We now prove that, under the assumption $A\star A\subseteq Z(A^{\cdot})$, Theorem~\ref{thm:rump-lau} can be lifted. First, we prove the easier direction.

\begin{proposition}\label{thm:starassocitiveskewbrace}
Let $(A,\cdot,\bullet)$ be a skew brace such that $A\star A\subseteq Z(A^{\cdot})$. If $A$ is two-sided then $\star$ is associative.
\end{proposition}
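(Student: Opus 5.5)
Under the hypothesis $A\star A\subseteq Z(A^{\cdot})$, both distributive laws take a simple form. By Lemma \ref{lem:fund}(1), $a\star(b\cdot c)=(a\star b)\cdot b\cdot(a\star c)\cdot b^{-1}=(a\star b)\cdot(a\star c)$, since $a\star c$ is central. So $\star$ is left-distributive over $\cdot$. By Lemma \ref{lem:conditiontwosided} and Remark \ref{rmk:twosidedditr}, two-sidedness gives right-distributivity:
\[
(a\cdot b)\star c=(a\star c)\cdot(b\star c).
\]
The plan is to compute $(a\bullet b)\star c$ in two ways and compare.

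For the first computation, Lemma \ref{lem:fund}(2) gives
\[
(a\bullet b)\star c=\bigl(a\star(b\star c)\bigr)\cdot(b\star c)\cdot(a\star c).
\]
For the second, write $a\bullet b=a\cdot(a\star b)\cdot b$ and use right-distributivity twice:
\[
(a\bullet b)\star c=(a\star c)\cdot\bigl((a\star b)\star c\bigr)\cdot(b\star c).
\]
Every factor appearing here lies in $A\star A\subseteq Z(A^{\cdot})$, so the factors commute. Cancelling $(a\star c)\cdot(b\star c)$ from both expressions yields
\[
a\star(b\star c)=(a\star b)\star c,
\]
which is associativity.

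The only point that needs care is justifying the rearrangement in the second computation. The elements $a\star c$, $b\star c$ and $(a\star b)\star c$ are generators of $A\star A$, hence central, so the order of the factors is irrelevant. Left-distributivity is not actually needed for this argument. The identity in Lemma \ref{lem:fund}(2) is the key input, and I do not expect a real obstacle beyond keeping the order of the factors straight before invoking centrality.
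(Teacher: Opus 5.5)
Your proposal is correct and is essentially the paper's proof. You compute $(a\bullet b)\star c$ once via Lemma~\ref{lem:fund}(2) and once by writing $a\bullet b=a\cdot(a\star b)\cdot b$ and applying right-distributivity (two-sidedness plus centrality) twice, then cancel using centrality of $A\star A$; your observation that left-distributivity is not needed is also accurate.
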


\begin{proof}
By Remark \ref{rmk:twosidedditr}, since $A\star A\subseteq Z(A^{\cdot})$, two-sidedness is equivalent to $(x\cdot y)\star c=(x\star c)\cdot(y\star c)$. Since $a\bullet b=a\cdot(a\star b)\cdot b$, by applying right-distributivity twice, we get
\[
(a\bullet b)\star c=(a\cdot(a\star b)\cdot b)\star c=(a\star c)\cdot((a\star b)\star c)\cdot(b\star c)
\]
while by 2) of Lemma \ref{lem:fund}, we have $(a \bul b) \star c= \bigl(a \star (b \star c)\bigr) \ld (b \star c) \ld (a \star c)$. Using again $A\star A\subseteq Z(A^{\cdot})$, we get that $\star$ is associative.
\end{proof}

We also have the other direction, that generalizes the result achieved by Lau in the brace setting \cite{Lau}.

\begin{theorem}\label{thm:seconddirection}
Let \((A,\cdot,\bullet)\) be a skew brace such that \(A\star A\subseteq Z(A^\cdot)\). If \(\star\) is associative, then 
\begin{equation}\label{eq:rigtdistributivity}
(x\cdot y)\star c=(x\star c)\cdot(y\star c),\qquad \text{for all}\ x,y,c\in A.
\end{equation}
In particular, by Remark \ref{rmk:twosidedditr}, $A$ is two-sided.
\end{theorem}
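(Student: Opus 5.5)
The plan is to reduce right distributivity \eqref{eq:rigtdistributivity} to an identity in which the middle factor already lies in $A\star A$, using the centrality hypothesis together with 1) and 2) of Lemma \ref{lem:fund}.

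First, left distributivity comes for free. By 1) of Lemma \ref{lem:fund},
\[
a\star(b\cdot c)=(a\star b)\cdot b\cdot(a\star c)\cdot b^{-1},
\]
and since $A\star A\subseteq Z(A^\cdot)$ this becomes $a\star(b\cdot c)=(a\star b)(a\star c)$. So every map $a\star(-)$ is an endomorphism of $A^\cdot$. Second, I combine 2) of Lemma \ref{lem:fund} with associativity and centrality to get
\begin{equation*}
(a\bullet b)\star c=(a\star c)\cdot\bigl((a\star b)\star c\bigr)\cdot(b\star c)\qquad\text{for all } a,b,c\in A .
\end{equation*}
Since $a\bullet b=a\cdot(a\star b)\cdot b$, this is exactly right distributivity for products of the special shape $a\cdot u\cdot b$ with $u=a\star b$.

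Next I reduce the general case to this shape. Given $x,y\in A$, put $a=x$ and $b=\lambda_x^{-1}(y)$. Then $(a\star b)\cdot b=\lambda_a(b)=y$, so $x\cdot y=a\bullet b$. The displayed identity gives
\[
(x\cdot y)\star c=(x\star c)\cdot(u\star c)\cdot(b\star c),\qquad u:=a\star b\in A\star A .
\]
Hence \eqref{eq:rigtdistributivity} follows once I know that
\begin{equation*}
(u\cdot b)\star c=(u\star c)\cdot(b\star c)\qquad\text{for all } u\in A\star A,\ b,c\in A,\tag{$\ast$}
\end{equation*}
since $y=u\cdot b$.

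To prove $(\ast)$ I apply the same trick with the roles swapped. Because $u$ is central, $u\cdot b=b\cdot u=b\bullet w$ with $w=\lambda_b^{-1}(u)$. Since $A\star A$ is an ideal, $w\in A\star A$, and also $b\star w=u\cdot w^{-1}\in A\star A$. The displayed identity then gives
\[
(u\cdot b)\star c=(b\star c)\cdot(w\star c)\cdot\bigl((u\cdot w^{-1})\star c\bigr).
\]
So $(\ast)$ reduces to additivity of $(-)\star c$ on the central subgroup $A\star A$ itself: $(u\cdot v)\star c=(u\star c)(v\star c)$ for $u,v\in A\star A$. With that, the right-hand side collapses to $(b\star c)(u\star c)$, using centrality once more.

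I expect this last step, right distributivity restricted to $A\star A$, to be the main obstacle, because the argument above is circular there: writing $u\cdot v=u\bullet v'$ with $v'=\lambda_u^{-1}(v)$ again produces a term $(u\star v')\star c$. My first attempt is to compare $(u\bullet v)\star c$ with $(v\bullet u)\star c$. Since $u$ and $v$ commute in $A^\cdot$, both have the form $u\cdot v\cdot(\text{element of }A\star A)$. I will also use associativity to rewrite $(u\star v)\star c=u\star(v\star c)$, which is additive in the last variable by left distributivity. The hope is that this forces the defect $\delta(u,v):=\bigl((u\cdot v)\star c\bigr)\cdot(v\star c)^{-1}(u\star c)^{-1}$ to satisfy a cocycle-type identity whose only solution is trivial. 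If that stalls, I will instead use 3) of Lemma \ref{lem:fund}: with $u$ central, conjugation by $u$ in $A^\bullet$ is controlled by $\lambda_u$, which should let me express $\delta$ as a $\star$-product of $\star$-products and show that it vanishes by associativity.
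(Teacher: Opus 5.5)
\textbf{There is a genuine gap: the proposal stops exactly where the actual work begins.}

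Your reductions are all correct:
\begin{itemize}
\item left distributivity;
\item the identity $(a\bullet b)\star c=(a\star c)\cdot((a\star b)\star c)\cdot(b\star c)$;
\item the rewriting $x\cdot y=x\bullet\lambda_x^{-1}(y)$, with $y=(x\star b)\cdot b$;
\item the rewriting $u\cdot b=b\bullet\lambda_b^{-1}(u)$.
\end{itemize}
Together, however, they only show that the theorem is \emph{equivalent} to additivity of $f:=(-)\star c$ on $B:=A\star A$ for arbitrary $c\in A$. That is precisely the step you leave open.

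The tools you list do not close it. On $B$ they give $f(u\bullet v)=f(u)\cdot f(u\star v)\cdot f(v)$ and, by associativity, $f(u\star v)=u\star f(v)$. As you observe yourself, isolating $f(u\cdot v)$ always reintroduces $f$ of another product. Comparing $u\bullet v=u\cdot v\cdot(u\star v)$ with $v\bullet u=u\cdot v\cdot(v\star u)$ just gives two instances of the same identity. Nothing in your sketch shows how this forces your defect $\delta$ to vanish, and the appeal to part 3) of Lemma \ref{lem:fund} is likewise only a hope.

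The obvious shortcut also fails. $(B,\cdot,\bullet)$ is a brace with associative $\star$, so Theorem \ref{thm:rump-lau} makes it two-sided. But that gives right distributivity only for $c\in B$, not for all $c\in A$.

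\textbf{The missing idea is the paper's, which bypasses the reduction entirely.}
\begin{enumerate}
\item Centrality gives $a\bullet a^{-1}=a\star a^{-1}$. Your displayed identity plus associativity then yields $a^{-1}\star c=(a\star c)^{-1}$.
\item Since $\lambda_{\overline a}(b)\cdot b^{-1}$ is central, $\lambda_{\overline a}(b)$ commutes with $b$. Using $\lambda_{\overline a}\lambda_a=\mathrm{id}$ one gets
\[
(\overline a\bullet b^{-1})\cdot b=\overline a\cdot b\cdot\lambda_{\overline a}(b)^{-1}=\overline a\bullet(a\star b).
\]
\item Associativity gives $\lambda_{a\star b}(c)=\lambda_a(b\star c)\cdot(b\star c)^{-1}\cdot c$. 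Apply $\lambda_{\overline a}$ to this and compare with $\lambda_{\overline a\bullet b^{-1}}(c)=\lambda_{\overline a}\bigl((b\star c)^{-1}\cdot c\bigr)$, which uses step 1. This shows $\lambda_{(\overline a\bullet b^{-1})\cdot b}(c)=(b\star c)\cdot\lambda_{\overline a\bullet b^{-1}}(c)$, that is,
\[
\bigl((\overline a\bullet b^{-1})\cdot b\bigr)\star c=\bigl((\overline a\bullet b^{-1})\star c\bigr)\cdot(b\star c).
\]
\item As $a$ varies, $\overline a\bullet b^{-1}$ runs over all of $A$, so this is right distributivity for every pair.
\end{enumerate}
The detour through $\overline a$ converts $\lambda$ of a $\cdot$-product into the composite $\lambda_{\overline a}\lambda_{a\star b}$, to which associativity applies directly. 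This is exactly what your circular rewriting lacks.
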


\begin{proof}
We have $a \bul a^{-1} \;=\; a \ld (a \star a^{-1}) \ld a^{-1}=a \star a^{-1}$. By 2) of Lemma \ref{lem:fund}, we get 
\[
\begin{split}
    (a\star a^{-1})\star c&=(a\bul a^{-1})\star c=\bigl(a \star (a^{-1} \star c)\bigr) \ld (a^{-1}\star c) \ld (a \star c)\\&=\bigl((a \star a^{-1}) \star c\bigr) \ld (a^{-1}\star c) \ld (a \star c)
\end{split}
\]
and then $a^{-1}\star c=(a\star c)^{-1}$. Given $a,b\in A$, we observe that $\lambda_{\overline{a}}(b)\cdot b^{-1}=\overline{a}\star b\in Z(A^{\cdot})$, hence $\lambda_{\overline{a}}(b)$ and $b$ commute. Therefore, we get
\begin{align}\label{auxeq}
(\overline{a}\bullet b^{-1})\cdot b&=\overline{a}\cdot\lambda_{\overline{a}}(b)^{-1}\cdot b=\overline{a}\cdot b\cdot \lambda_{\overline{a}}(b^{-1})=\overline{a}\cdot\lambda_{\overline{a}}(\lambda_{a}(b)\cdot b^{-1})\\&=\overline{a}\bul(a\star b)\nonumber.
\end{align}
We also have
\begin{equation}\label{auxeq2}
\lambda_{a\star b}(c)
 =((a\star b)\star c)\cdot c=(a\star(b\star c))\cdot c=\lambda_a(b\star c)\cdot(b\star c)^{-1}\cdot c
\end{equation}
so that 
\[
\lambda_{(\overline{a}\bullet b^{-1})\cdot b}(c)\overset{\eqref{auxeq}}{=}\lambda_{\overline{a}\bul(a\star b)}(c)=\lambda_{\overline{a}}\lambda_{a\star b}(c)\overset{\eqref{auxeq2}}{=}(b\star c)\cdot\lambda_{\overline{a}}(b\star c)^{-1}\cdot\lambda_{\overline{a}}(c).
\]
Moreover, using $a^{-1}\star c=(a\star c)^{-1}$, we obtain 
\[
\lambda_{\overline{a}\bul b^{-1}}(c)=\lambda_{\overline{a}}\lambda_{b^{-1}}(c)=\lambda_{\overline{a}}((b^{-1}\star c)\cdot c)=\lambda_{\overline{a}}(b\star c)^{-1}\cdot \lambda_{\overline{a}}(c),
\]
hence
\[
\lambda_{(\overline{a}\bullet b^{-1})\cdot b}(c)=(b\star c)\cdot\lambda_{\overline{a}\bul b^{-1}}(c).
\]
As a consequence, $\cdot$-multiplying on the right by $c^{-1}$, we get
\[
((\overline{a}\bullet b^{-1})\cdot b)\star c=(b\star c)\cdot((\overline{a}\bul b^{-1})\star c)=((\overline{a}\bul b^{-1})\star c)\cdot(b\star c)
\]
and, since for a fixed $b$ the term $\overline{a}\bul b^{-1}$ covers all $A$, we get \eqref{eq:rigtdistributivity}.
\end{proof}

\begin{remark}
In general, it is known that the associativity of $\star$ does not imply two-sidedness, see \cite[\S 2.2]{KSV}. In Proposition \ref{prop:index-two} we provide a constructive method to realize skew braces which are not two-sided and whose $\star$ operation is associative.
\end{remark}

We can obtain something more than Proposition \ref{thm:starassocitiveskewbrace} and Theorem \ref{thm:seconddirection}. To do this, we first study the case of almost trivial skew braces $(G,\cdot,\cdot^{\mathrm{op}})$. In this setting, the $\star$-product becomes
\[
  a \star b
  = a^{-1}\cdot(a \bul b)\cdot b^{-1}
  = a^{-1}\cdot b\cdot a\cdot b^{-1}
  = [a,\,b^{-1}],
\]
where $[x,y] = x^{-1}y^{-1}xy$ is the classical group commutator. The skew brace $(G,\ld,\ld^{\mathrm{op}})$ is always two-sided:
\[
(a\cdot b)\bullet c=c\cdot a\cdot b=c\cdot a\cdot c^{-1}\cdot c\cdot b=(a \bul c)\cdot c^{-1} \ld (b \bul c).
\]
We consider the lower series for groups: $\gamma_{1}(G):=G$ and $\gamma_{n+1}(G):=[\gamma_{n}(G),G]$ for $n\geq1$. For an almost trivial skew brace, the left and right \(\star\)-series coincide with the lower central series of the underlying group, see e.g.\ \cite[Section~3]{Tsang}. In particular, if \(\gamma_3(G)=1\), then all triple \(\star\)-products vanish. Proposition \ref{prop:quasi} shows that, conversely, associativity of the $\star$-product alone already forces $\gamma_3(G)=1$. First we recall some identities that will be useful.

\begin{remark}\label{rmk:commutatoridentities}
Given a group $G$, the following equalities are satisfied:
\[
[x,yz]=[x,z][x,y]^{z},\qquad [xy,z]=[x,z]^{y}[y,z],
\]
where $g^{h}=h^{-1}gh$, for all $h,g\in G$. From these equalities, one obtains $1=[x,yy^{-1}]=[x,y^{-1}][x,y]^{y^{-1}}$, from which one gets
\[
[x,y^{-1}]=([x,y]^{y^{-1}})^{-1}=([x,y]^{-1})^{y^{-1}}=[x,y]^{-1}[[x,y]^{-1},y^{-1}],
\]
i.e. $[x,y^{-1}]=[x,y]^{-1}z$, with $z\in\gamma_{3}(G)$.
\end{remark}

The following result may be viewed as an analogue, for the operation $a\star b=[a,b^{-1}]$, of the classical results on associativity and Engel identities for group commutators.

\begin{proposition}\label{prop:quasi}
Let $(G,\ld)$ be an arbitrary group and consider the almost trivial skew brace $(G,\ld,\ld^{\mathrm{op}})$. Then $\star$ is associative if and only if $\gamma_3(G) = 1$, i.e.\ $G$ has
nilpotency class at most $2$.
\end{proposition}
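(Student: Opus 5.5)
The plan is to handle the two implications separately, with all computations inside the group $G$ using $a\star b=[a,b^{-1}]$.

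($\Leftarrow$) Suppose $\gamma_3(G)=1$. Every commutator is central, so $a\star b=[a,b^{-1}]\in\gamma_2(G)\subseteq Z(G)$. Hence for all $a,b,c$,
\[
(a\star b)\star c=[[a,b^{-1}],c^{-1}]=1=[a,[b,c^{-1}]^{-1}]=a\star(b\star c).
\]
So $\star$ is associative, and in fact all triple products vanish.

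($\Rightarrow$) Suppose $\star$ is associative. The aim is to show $[[x,y],z]=1$ for all $x,y,z$.

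The key step is to compare two triple products with the same inner element. Take $b=c$. Then $b\star c=[b,b^{-1}]=1$, so $a\star(b\star b)=a\star 1=1$. Associativity gives
\[
[[a,b^{-1}],b^{-1}]=(a\star b)\star b=1 \quad\text{for all } a,b.
\]
Replacing $b$ by $b^{-1}$, we get $[[a,b],b]=1$ for all $a,b$. Thus $G$ is $2$-Engel. It is not enough to quote the classical fact that $2$-Engel groups have class at most $3$, so we need more.

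For that, use the vanishing with general arguments. Associativity gives
\[
[[a,b^{-1}],c^{-1}]=[a,[b,c^{-1}]^{-1}]
\]
for all $a,b,c$. Since $G$ is $2$-Engel, $[x,y]$ commutes with both $x$ and $y$ for all $x,y$. Now set $a=b$. The left side is $[[b,b^{-1}],c^{-1}]=1$. Hence $[b,[b,c^{-1}]^{-1}]=1$, which we already knew. This choice gives nothing new.

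A better specialization is $c=a$. Then the identity reads
\[
[[a,b^{-1}],a^{-1}]=[a,[b,a^{-1}]^{-1}].
\]
By the $2$-Engel property, both sides are trivial, since $[a,b^{-1}]$ commutes with $a$. Again nothing new.

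So we must use the full identity. Write $u=[a,b^{-1}]$ and $v=[b,c^{-1}]$, so the identity is $[u,c^{-1}]=[a,v^{-1}]$. We combine it with the standard consequences of the $2$-Engel law:
\begin{enumerate}
\item $[x,y,z]=[y,z,x]$, i.e.\ triple commutators are invariant under cyclic permutation;
\item $[x,y,z]^3=1$;
\item $[x,y,z]$ is alternating and multilinear in its arguments.
\end{enumerate}
With these, $[a,v^{-1}]=[a,[b,c^{-1}]]^{-1}=[a,b,c]$ up to signs from linearity. Likewise $[u,c^{-1}]=[a,b,c]$ with a sign. Tracking the signs should yield an equation of the form $[a,b,c]^2=1$. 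Together with $[a,b,c]^3=1$, this gives $[a,b,c]=1$, so $\gamma_3(G)=1$.

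The main obstacle is the sign bookkeeping in this last step. We have to verify, modulo $\gamma_4$ (which vanishes, as $2$-Engel groups have class at most $3$), that $[[a,b^{-1}],c^{-1}]=[a,b,c]$ while $[a,[b,c^{-1}]^{-1}]=[a,b,c]^{-1}$. The first equality follows because the two inversions contribute two sign changes. For the second, $[a,[b,c]]=[b,c,a]^{-1}=[a,b,c]^{-1}$ by cyclicity, and then two further inversions leave the sign unchanged. Granted this, we obtain $[a,b,c]=[a,b,c]^{-1}$. Hence $[a,b,c]^2=1$, and combined with exponent $3$ this forces $[a,b,c]=1$. The sign identities should be checked carefully using Remark~\ref{rmk:commutatoridentities}, which says $[x,y^{-1}]=[x,y]^{-1}$ modulo $\gamma_3$, lifted one level to hold modulo $\gamma_4$.
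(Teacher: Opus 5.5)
Your proof is correct, and it follows the paper's skeleton up to the last step. Both arguments use the specialization $b=c$ to obtain the $2$-Engel law, then the Levi--Hopkins theorem to get $\gamma_4(G)=1$. Both then reduce the two bracketings, modulo central error terms, to $(a\star b)\star c=[[a,b],c]$ and $a\star(b\star c)=[a,[b,c]]=[[b,c],a]^{-1}$. Your sign bookkeeping is exactly this reduction, which the paper carries out explicitly with Remark~\ref{rmk:commutatoridentities}.

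The routes differ only in the closing step.

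\emph{Your closing step.} You use two further classical facts about $2$-Engel groups due to Levi: cyclic invariance $[a,b,c]=[b,c,a]$, and $[a,b,c]^3=1$. Associativity then becomes $[a,b,c]=[a,b,c]^{-1}$, so $[a,b,c]^2=1$, and coprimality with the exponent $3$ finishes.

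\emph{The paper's closing step.} It uses only the Hall--Witt identity, which in class $3$ reads $[[a,b],c]\,[[b,c],a]\,[[c,a],b]=1$. Substituting the associativity relation $[[a,b],c]\,[[b,c],a]=1$ gives $[[c,a],b]=1$ at once. This needs neither cyclicity nor the exponent-$3$ law.

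\emph{Comparison.} The exponent-$3$ law is itself usually derived from Hall--Witt together with cyclicity. So the paper's route is more economical: its only external input is the class-$3$ theorem. Yours requires citing, or proving, two more results. Cyclicity does follow quickly from your item (3), since in class $3$ the form $[x,y,z]$ is multilinear and vanishes when two arguments coincide.

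The exploratory specializations $a=b$ and $c=a$ contribute nothing and can be removed.
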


\begin{proof}
We recall that $a \star b = [a,b^{-1}]$, for all $a,b\in G$.

\smallskip\noindent
\textbf{($\Leftarrow$)} Suppose $\gamma_3(G)=1$. 
Then, for any $u \in \gamma_2(G)$ and $v\in G$, we have $[u,v]=1$.  Hence $a \star (b \star c) = [a,[b,c^{-1}]^{-1}] = 1$
and $(a \star b) \star c = [[a,b^{-1}],c^{-1}]=1$, so all triple products vanish
and $\star$ is trivially associative.

\smallskip\noindent
\textbf{($\Rightarrow$)} Assume that $\star$ is associative. Taking $b=c$ and using $b\star b=[b,b^{-1}]=1$ together with $a\star1=1$ we get
\[
[[a,b^{-1}],b^{-1}]=(a\star b)\star b=a\star(b\star b)=a\star 1=1,
\]
for all $a,b\in G$. Therefore, the group is 2-Engel and every 2-Engel group is nilpotent of class at most 3 \cite{Levi,Hopkins}, so $\gamma_{4}(G)=1$. 

Since $\gamma_3(G)\subseteq Z(G)$, for $u\in\gamma_2(G)$ and $v\in G$
we have
\[
[u^{-1},v]=[u,v]^{-1}=[u,v^{-1}].
\]
Moreover, the commutator identities recalled in Remark \ref{rmk:commutatoridentities} give
\[
[a,b^{-1}]\equiv[a,b]^{-1}\pmod{\gamma_3(G)},\qquad
[b,c^{-1}]^{-1}\equiv[b,c]\pmod{\gamma_3(G)}.
\]
As the omitted factors are central, they disappear in further
commutators. Consequently,
\begin{align*}
(a\star b)\star c
 &= [[a,b^{-1}],c^{-1}]
  = [[a,b]^{-1},c^{-1}]=[[a,b],c],\\
a\star(b\star c)
 &= [a,[b,c^{-1}]^{-1}]=[a,[b,c]].
\end{align*}
The Hall--Witt identity gives
\[
\begin{split}
1&=[[a,b^{-1}],c]^{b}[[b,c^{-1}],a]^{c}[[c,a^{-1}],b]^{a}=[[a,b^{-1}],c][[b,c^{-1}],a][[c,a^{-1}],b]\\&=[[a,b],c]^{-1}[[b,c],a]^{-1}[[c,a],b]^{-1},
\end{split}
\]
since the triple commutators are central.
Hence we have
\begin{equation}\label{finalequation}
[a,[b,c]]=[[b,c],a]^{-1}=[[a,b],c]\cdot[[c,a],b].
\end{equation}
Associativity and the preceding computations give
$[[a,b],c]=[a,[b,c]]$. Thus, from \eqref{finalequation}, we get
$[[c,a],b]=1$ for all $a,b,c\in G$, so $\gamma_3(G)=1$.
\end{proof}

\begin{corollary}\label{cor:almosttrivialstarassoc}
Let $G$ be a group with $\gamma_{3}(G)\not=1$.
Then the almost trivial skew brace on $G$ is two-sided but $\star$ is not associative.
\end{corollary}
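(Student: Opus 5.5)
The corollary follows by combining two facts already established before its statement: two-sidedness of every almost trivial skew brace, and Proposition \ref{prop:quasi}.

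First, we verify two-sidedness. The computation given just before Remark \ref{rmk:commutatoridentities} shows that $(G,\cdot,\cdot^{\mathrm{op}})$ satisfies \eqref{eq:twosided} for every group $G$:
\[
(a\cdot b)\bullet c=c\cdot a\cdot b=(c\cdot a\cdot c^{-1})\cdot(c\cdot b)=(a\bullet c)\cdot c^{-1}\cdot(b\bullet c).
\]
This uses no hypothesis on $G$, so it holds in particular when $\gamma_3(G)\neq 1$.

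Second, we show that $\star$ is not associative. Proposition \ref{prop:quasi} states that $\star$ is associative on the almost trivial skew brace if and only if $\gamma_3(G)=1$. Taking the contrapositive of the forward implication, $\gamma_3(G)\neq1$ forces $\star$ to be non-associative. The substantive part is therefore already contained in that proposition. There, the 2-Engel condition is derived from associativity, Levi--Hopkins reduces to class at most three, and the Hall--Witt identity finishes the argument. The corollary itself needs no further work.

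If a concrete witness is wanted, I would take $G$ to be any group of nilpotency class $3$, or a non-nilpotent group such as $S_3$. For such $G$ one can exhibit $a,b,c$ with $[[a,b^{-1}],c^{-1}]\neq[a,[b,c^{-1}]^{-1}]$. This is not needed for the proof.
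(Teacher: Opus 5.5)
Your proposal is correct and follows essentially the same route as the paper. The paper states this corollary without a separate proof, as the immediate combination of two earlier results: the two-sidedness computation for $(G,\cdot,\cdot^{\mathrm{op}})$ given just before Remark \ref{rmk:commutatoridentities}, and the contrapositive of the forward implication of Proposition \ref{prop:quasi}.
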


\begin{remark}\label{rmk:almosttriavilstarass}
The smallest example is the almost trivial skew brace over $S_{3}$, of order 6: every group of order less than 6 is abelian, and for a brace two-sidedness is equivalent to $\star$ being associative by Theorem \ref{thm:rump-lau}. In this case $S_{3}\star S_{3}=[S_3,S_3]=A_3$.
\end{remark}

Proposition~\ref{prop:quasi} allows us to prove the following extension of Theorem~\ref{thm:rump-lau} to skew braces.

\begin{theorem}\label{thm:extension}
Let $(A,\cdot,\bullet)$ be a skew brace. Any two of the following conditions imply the third:
\begin{itemize}
    \item[1)] $A$ is two-sided;
    \item[2)] $\star$ is associative;
    \item[3)] $A\star A\subseteq Z(A^\cdot)$.
\end{itemize}
\end{theorem}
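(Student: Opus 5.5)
The implications 1)+3)$\Rightarrow$2) and 2)+3)$\Rightarrow$1) are exactly Proposition \ref{thm:starassocitiveskewbrace} and Theorem \ref{thm:seconddirection}. So only 1)+2)$\Rightarrow$3) needs proof. The plan is to pass to the almost trivial quotient $A/(A\star_{\mathrm{op}}A)$ and apply Proposition \ref{prop:quasi} there. Then we lift the conclusion back to $A$ using two-sidedness.

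\textbf{Step 1: the quotient.} Let $J=A\star_{\mathrm{op}}A$. This is an ideal, and $A/J=(A/J,\cdot,\cdot^{\mathrm{op}})$ is almost trivial. Both the $\star$-product and associativity pass to quotients by ideals, so $\star$ is associative on $A/J$. Proposition \ref{prop:quasi} then gives $\gamma_3(A^\cdot/J)=1$. The useful consequence is: for all $a,b,c\in A$, the element $(a\star b)\star c$ agrees modulo $J$ with a commutator of a commutator, and therefore lies in $J$.

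\textbf{Step 2: an identity for $b^{-1}(a\star c)b$.} Apply Lemma \ref{lem:conditiontwosided} with $a\star b$ in place of $a$:
\[
((a\star b)\cdot b)\star c=b^{-1}\cdot((a\star b)\star c)\cdot b\cdot(b\star c).
\]
Note that $(a\star b)\cdot b=a^{-1}\cdot(a\bullet b)$. Expanding $(a^{-1}\cdot(a\bullet b))\star c$ once more with \eqref{eqcondtwosided}, and comparing with Lemma \ref{lem:fund}(2) applied to $(a\bullet b)\star c$, should give a relation among
\[
b^{-1}(a\star c)b,\qquad a\star(b\star c),\qquad (a\star b)\star c.
\]
Associativity lets us identify the last two terms. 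The target is the identity
\[
b^{-1}(a\star c)\,b=a\star c\quad\text{for all }a,b,c,
\]
which says exactly that $A\star A$ is central in $A^\cdot$.

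\textbf{Step 3: handling $x^{-1}\star c$.} I expect Step 2 will also need an expression for $x^{-1}\star c$. Setting $a=x$, $b=x^{-1}$ in \eqref{eqcondtwosided} gives
\[
1=x(x\star c)x^{-1}(x^{-1}\star c),
\]
that is, $x^{-1}\star c=x(x\star c)^{-1}x^{-1}$. This replaces the identity $x^{-1}\star c=(x\star c)^{-1}$ that was available in the proof of Theorem \ref{thm:seconddirection}.

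\textbf{Main obstacle.} The hard part is closing the argument so that the conjugation terms cancel exactly rather than only modulo $J$. I would first try to push the computation of Step 2 through directly, using only associativity and \eqref{eqcondtwosided}, aiming for $[b,a\star c]=1$. If a residual term remains, I would show it lies in $J$ and is killed using Step 1. One route is to feed the Hall--Witt-type cancellation from the proof of Proposition \ref{prop:quasi} into the identity for $(a\star b)\star c$. If that still does not close, I would specialise $b=c$: then associativity gives $(a\star c)\star c=a\star 1=1$, which should yield centrality of $a\star c$ with respect to the elements $c$. From there I would extend to general $b$ by applying the right-distributivity that two-sidedness provides modulo conjugation.
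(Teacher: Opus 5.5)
Your reduction to 1)+2)$\Rightarrow$3), the passage to the almost trivial quotient $A/J$ with $J=A\star_{\mathrm{op}}A$, and the use of Proposition~\ref{prop:quasi} all match the paper. Your formula $x^{-1}\star c=x(x\star c)^{-1}x^{-1}$ is also correct. But there is a genuine gap: the proposal stops where the real work begins. You give no mechanism that turns information modulo $J$ into exact identities in $A$. Knowing that $(a\star b)\star c\in J$ does not make it ``killed'', since $J$ is not trivial.

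The missing ingredient is that a two-sided skew brace satisfies $[A\star A,A\star_{\mathrm{op}}A]_{\cdot}=1$ \cite[Lemma~4.1]{Trappeniers}. With $B=A\star A$, this fact combined with your Step~1 gives $a\star B\subseteq B\cap J\subseteq Z(B)$. Then Lemma~\ref{lem:fund}(1) makes $a\star(-)$ a homomorphism $B\to Z(B)$. Comparing the two expansions of $a\star(b\star(x\cdot y))=(a\star b)\star(x\cdot y)$ yields $x(a\star z)x^{-1}=a\star(xzx^{-1})$ for $z\in B$, i.e.\ $x\lambda_a(z)x^{-1}=\lambda_a(xzx^{-1})$. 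This forces $a\star x\in Z(B)$, so $B$ is abelian. Without this step, your Step~2 comparison only yields
\[
b^{-1}(a\star b)^{-1}(a\star c)(a\star b)\,w\,b\,(b\star c)=w\,(b\star c)\,(a\star c),\qquad w:=(a\star b)\star c=a\star(b\star c),
\]
and nothing in your plan removes the conjugation by $a\star b$ or reorders the factors inside $B$.

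Even once $B$ is abelian, this relation does not give your stated target $b^{-1}(a\star c)b=a\star c$ directly. It says that $b$ commutes with $(a\star c)\,w$, which by Lemma~\ref{lem:fund}(1) equals $a\star\lambda_b(c)$. You then need surjectivity of $\lambda_b$ to conclude that $b$ commutes with every $a\star x$.

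Finally, your fallback of specialising $b=c$ rests on a false identity. The equation $c\star c=1$ holds in the almost trivial quotient $A/J$, but not in $A$; in a radical ring, $c\star c=c^2$. So $(a\star c)\star c=a\star(c\star c)$ need not be $1$, and that route does not start.
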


\begin{proof}
By Proposition \ref{thm:starassocitiveskewbrace}, we already know that 1) and 3) imply 2), while by Theorem \ref{thm:seconddirection} we know that 2) and 3) imply 1). Therefore, it remains to be proven that 1) and 2) imply 3).

Set $B:=A\star A$ and $J:=A\star_{\mathrm{op}}A$. Both are ideals of $A$,
and two-sidedness gives $[B,J]_{\cdot}=1$ by
\cite[Lemma~4.1]{Trappeniers}. 
Moreover, $A/J$ is almost trivial and the quotient map preserves $\star$, hence its induced $\star$-product is associative. By
Proposition~\ref{prop:quasi}, $\gamma_{3}(A/J)=1$. In the almost trivial skew brace $A/J$, the maps $\lambda_{aJ}$ are inner automorphisms of $(A/J)^\cdot$, while the $\star$-products belong to its commutator subgroup, which is central since $\gamma_3(A/J)=1$. Hence $\lambda_{aJ}$ fixes each $(b\star c)J$, and therefore fixes the subgroup $BJ/J$ they generate.
It follows that $a\star z\in J$ for every $z\in B$.
Since $a\star z\in B$ as well, we obtain
\[
a\star B\subseteq I:=B\cap J\subseteq Z(B).
\]
For $u,v\in B$, by (1) of Lemma~\ref{lem:fund}, we get
\begin{equation}\label{eqqauxiliary}
a\star(u\cdot v)=(a\star u)\cdot u\cdot (a\star v)\cdot u^{-1}=(a\star u)\cdot(a\star v).
\end{equation}
Thus $a\star(-)|_B:B\to Z(B)$ is a morphism of groups.

Therefore, using again (1) of Lemma~\ref{lem:fund}, we have
\[
\begin{split}
a\star(b\star(x\cdot y))&=a\star\big((b \star x) \ld x \ld (b \star y) \ld x^{-1}\big)\\&\overset{\eqref{eqqauxiliary}}{=}(a\star(b\star x))\cdot(a\star(x \ld (b \star y) \ld x^{-1}))
\end{split}
\]
but, since $\star$ is associative, we also have 
\[
\begin{split}
a\star(b\star(x\cdot y))&=(a\star b)\star(x\cdot y)=((a\star b) \star x) \ld x \ld ((a\star b) \star y) \ld x^{-1}\\&=(a\star(b\star x)) \ld x \ld (a\star (b \star y)) \ld x^{-1}
\end{split}
\]
and then
\begin{equation}\label{eq:centraleq}
x \ld (a\star (b \star y)) \ld x^{-1}=a\star(x \ld (b \star y) \ld x^{-1}).
\end{equation}
For fixed $a,x\in A$, the maps
$z\mapsto x\cdot(a\star z)\cdot x^{-1}$ and
$z\mapsto a\star(x\cdot z\cdot x^{-1})$ are morphisms of groups on $B$:
the latter is defined on $B$ because $B$ is normal in $A^{\cdot}$.
The equality \eqref{eq:centraleq} shows that they agree on every generator
$b\star y$ of $B$, so they agree on all of $B$.
This implies
\begin{equation}
x\cdot(a\star z)\cdot x^{-1}=a\star(x\cdot z\cdot x^{-1}),\qquad \text{for all}\ x\in A,z\in B.
\end{equation}
By multiplying by $x\cdot z\cdot x^{-1}$ on the right, we get
\[
x\cdot\lambda_{a}(z)\cdot x^{-1}=\lambda_{a}(x\cdot z\cdot x^{-1})=\lambda_{a}(x)\cdot\lambda_{a}(z)\cdot\lambda_{a}(x)^{-1}.
\]
This means that $a\star x$ commutes with $x\cdot\lambda_{a}(z)\cdot x^{-1}$. Since $B$ is normal and $\lambda_{a}(B)=B$, the elements $x\cdot\lambda_{a}(z)\cdot x^{-1}$ with $z\in B$ cover all $B$. Hence $a\star x\in Z(B)$ and they are generators, then $B$ is abelian. Therefore, we get
\[
\begin{split}
    (a\bullet b)\star c&=(a\cdot(a\star b)\cdot b)\star c\\&\overset{\eqref{eqcondtwosided}}{=}b^{-1}\cdot(a\star b)^{-1}\cdot(a\star c)\cdot(a\star b)\cdot((a\star b)\star c)\cdot b\cdot(b\star c)\\&=b^{-1}\cdot(a\star c)\cdot((a\star b)\star c)\cdot b\cdot(b\star c).
\end{split}
\]
On the other hand, by 2) of Lemma \ref{lem:fund} and associativity of $\star$ we get
\[
(a \bul b) \star c
        = \bigl(a \star (b \star c)\bigr) \ld (b \star c) \ld (a \star c)=(a \star c)\ld\bigl((a \star b) \star c\bigr)\ld(b \star c),
\]
so that 
\[
b^{-1}\cdot(a\star c)\cdot((a\star b)\star c)\cdot b=(a \star c)\ld\bigl((a \star b) \star c\bigr).
\]
But we also have that 
\[
\begin{split}
a\star\lambda_{b}(c)&=a\star((b\star c)\cdot c)=(a \star (b\star c)) \ld (b\star c) \ld (a \star c) \ld (b\star c)^{-1}\\&=(a \star (b\star c)) \ld (a \star c)=(a \star c)\ld ((a \star b)\star c) 
\end{split}
\]
and then $b\cdot(a\star\lambda_{b}(c))=(a\star\lambda_{b}(c))\cdot b$. Since $\lambda_{b}$ is surjective, we have that $b$ commutes with any $a\star x$ and then, since $b\in A$ is arbitrary, $B=A\star A\subseteq Z(A^{\cdot})$.
\end{proof}

\begin{corollary}\label{app:generators}
Let $(A,\cdot,\bullet)$ be a two-sided skew brace such that $A^{\cdot}$ is generated by
 $s_1,\ldots,s_d$. Then $\star$ is associative if and only if
\begin{equation}\label{condcommutators}
    [s_i\star s_j,s_k]_{\cdot}=1
 \qquad 1\leq i,j,k\leq d.
\end{equation}
 When these conditions hold,
 $A\star A=\langle s_i\star s_j\mid1\leq i,j\leq d\rangle_{\cdot}$.
 \end{corollary}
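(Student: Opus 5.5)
The plan is to reduce everything to Theorem~\ref{thm:extension}, which for a two-sided $A$ says that $\star$ is associative if and only if $A\star A\subseteq Z(A^\cdot)$.

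\textbf{Forward direction.} If $\star$ is associative, then 1) and 2) of Theorem~\ref{thm:extension} give $A\star A\subseteq Z(A^\cdot)$. In particular every $s_i\star s_j$ commutes with every $s_k$, which is \eqref{condcommutators}.

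\textbf{Converse: first variable.} Assume \eqref{condcommutators}, so $s_i\star s_j\in Z(A^\cdot)$ because the $s_k$ generate $A^\cdot$. Write $Z:=Z(A^\cdot)$. I will show $x\star y\in Z$ for all $x,y$ in two steps. Fix a generator $c=s_j$ and set $X_c:=\{x\in A\mid x\star c\in Z\}$; it contains every $s_i$. By Lemma~\ref{lem:conditiontwosided}, two-sidedness gives
\[
(a\cdot b)\star c=b^{-1}\cdot(a\star c)\cdot b\cdot(b\star c).
\]
\begin{itemize}
\item Closure under products: if $a,b\in X_c$, the right-hand side equals $(a\star c)\cdot(b\star c)$, which lies in $Z$.
\item Closure under inverses: put $b=a^{-1}$ and use $1\star c=1$. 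This gives $a^{-1}\star c=a\cdot(a\star c)^{-1}\cdot a^{-1}$, which equals $(a\star c)^{-1}\in Z$ when $a\in X_c$.
\end{itemize}
So $X_c$ is a subgroup containing the generators, hence $X_c=A$. Thus $x\star s_j\in Z$ for all $x\in A$ and all $j$.

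\textbf{Converse: second variable.} Fix $a\in A$ and set $Y_a:=\{y\in A\mid a\star y\in Z\}$; by the previous step it contains all $s_j$. By (1) of Lemma~\ref{lem:fund},
\[
a\star(b\cdot c)=(a\star b)\cdot b\cdot(a\star c)\cdot b^{-1}.
\]
\begin{itemize}
\item Closure under products: if $b,c\in Y_a$, this equals $(a\star b)\cdot(a\star c)\in Z$.
\item Closure under inverses: from $1=a\star(y\cdot y^{-1})$ we get $a\star y^{-1}=y^{-1}\cdot(a\star y)^{-1}\cdot y$, which equals $(a\star y)^{-1}\in Z$ when $y\in Y_a$.
\end{itemize}
Hence $Y_a=A$, and therefore $A\star A\subseteq Z(A^\cdot)$. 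Now 1) and 3) of Theorem~\ref{thm:extension} (equivalently Proposition~\ref{thm:starassocitiveskewbrace}) give associativity of $\star$.

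\textbf{Generators of $A\star A$.} Once centrality holds, the computations above show that $\star$ is a bimultiplicative map:
\[
(xy)\star c=(x\star c)(y\star c),\qquad a\star(yz)=(a\star y)(a\star z),
\]
\[
x^{-1}\star c=(x\star c)^{-1},\qquad a\star y^{-1}=(a\star y)^{-1}.
\]
Writing $x$ and $y$ as words in the $s_i^{\pm1}$ and expanding therefore expresses $x\star y$ as a product of elements $(s_i\star s_j)^{\pm1}$. This gives $A\star A=\langle s_i\star s_j\rangle_\cdot$.

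I expect no real obstacle. The only care needed is the order of the two subgroup arguments: the first variable must be handled with the second variable fixed at a generator before the second variable can be extended to all of $A$.
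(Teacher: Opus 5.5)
Your proof is correct and follows essentially the paper's route: the paper uses the same expansion identities (Lemma~\ref{lem:fund}(1), \eqref{eqcondtwosided} and the two inverse formulas), applied one variable at a time, and then invokes Theorem~\ref{thm:extension}. The paper differs only in minor ways: it expands into the normal closure $N$ of the $s_i\star s_j$, showing $N=A\star A$ for any two-sided skew brace, rather than directly into $Z(A^\cdot)$; it reads off the generating set from the fact that central elements generate a normal subgroup; and it expands the second argument first (with the first fixed at $s_i$), so your closing remark that the first variable \emph{must} be handled first is overstated, since either order works.
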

\begin{proof}
Let $N$ be the normal closure in $A^{\cdot}$ of the $s_i\star s_j$. By (1) of Lemma \ref{lem:fund} and \eqref{eqcondtwosided}, together
 with their consequences
 \[
 a\star b^{-1}=b^{-1}\cdot(a\star b)^{-1}\cdot b,\qquad
 a^{-1}\star b=a\cdot(a\star b)^{-1}\cdot a^{-1},
 \]
 allow us to expand words in both arguments. First expanding the second
 argument with the first fixed at $s_i$, and then expanding the first
 argument, shows that every $a\star b$ belongs to $N$. Since $A\star A$
 is normal and contains the generators of $N$, we have $N=A\star A$.
 The commutator conditions \eqref{condcommutators} say that each $s_i\star s_j$
 commutes with all generators of $A^{\cdot}$, and hence is central.
 They therefore imply $N\subseteq Z(A^{\cdot})$, and
Theorem~\ref{thm:extension} gives associativity. The converse follows
 from the same theorem. Under these conditions the normal closure is
 already the subgroup generated by the $s_i\star s_j$.
 \end{proof}

We now provide a constructive method to obtain $\star$-associative skew braces that are not two-sided.

\begin{proposition}\label{prop:index-two}
Let $(G,\cdot)$ be a group, $H\leq G$ be a subgroup of index $2$, and $\alpha\in\Aut(G)$ satisfying $\alpha^2=\mathrm{Id}$ and $\alpha(H)=H$.

Define $\lambda_a=\mathrm{Id}$ for $a\in H$ and $\lambda_a=\alpha$
for $a\notin H$. Then $a\bul b:=a\cdot\lambda_a(b)$ defines a skew
brace on $G$, with
\begin{equation}\label{defstarproduct}
a\star b=
\begin{cases}
1,&a\in H,\\
\alpha(b)\cdot b^{-1},&a\notin H.
\end{cases}
\end{equation}
Put $\Delta:=\{\alpha(c)\cdot c^{-1}\mid c\in G\}$. Then
$\Delta\subseteq H$, and the following hold:
\begin{enumerate}[label=\textup{\arabic*)}]
\item $\star$ is associative if and only if
$\Delta\subseteq\Fix(\alpha)$ if and only if $d^2=1$
for every $d\in\Delta$. In this case all triple $\star$-products are trivial.
\item The skew brace is two-sided if and only if
\[
b^{-1}\cdot d\cdot b=d\quad(d\in\Delta,\ b\in H),\qquad
b^{-1}\cdot d\cdot b=d^{-1}\quad(d\in\Delta,\ b\notin H).
\]
In particular, when $\star$ is associative, two-sidedness is equivalent
to $\Delta\subseteq Z(G)$.
\end{enumerate}
\end{proposition}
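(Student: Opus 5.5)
First I will check that the construction gives a skew brace. Standard criterion: if $\lambda:G\to\Aut(G)$ satisfies $\lambda_{a\cdot\lambda_a(b)}=\lambda_a\lambda_b$, then $a\bul b:=a\cdot\lambda_a(b)$ is a group operation and \eqref{eq:brace-axiom} holds. For \eqref{eq:brace-axiom}, since $\lambda_a$ is an automorphism,
\[
a\bul(bc)=a\lambda_a(b)\lambda_a(c)=(a\bul b)a^{-1}(a\bul c).
\]
Associativity of $\bul$ reduces to the compatibility identity. The identity is $1$, and the inverse of $a$ is $\lambda_a^{-1}(a^{-1})$. For the identity itself, $\alpha(H)=H$ forces $\alpha(G\setminus H)=G\setminus H$. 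Hence $a\lambda_a(b)\in H$ if and only if exactly zero or two of $a,b$ lie outside $H$. Here I use that $G/H\cong\mathbb{Z}/2$. So $\lambda_{a\bul b}=\alpha^{\epsilon(a)+\epsilon(b)}=\lambda_a\lambda_b$ by $\alpha^2=\mathrm{Id}$, where $\epsilon$ is the indicator of $G\setminus H$. Formula \eqref{defstarproduct} is then immediate from $a\star b=\lambda_a(b)b^{-1}$.

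Next, $\Delta\subseteq H$: for $c\in H$ both $\alpha(c)$ and $c^{-1}$ lie in $H$. For $c\notin H$ both $\alpha(c)$ and $c^{-1}$ lie outside $H$, so their product lies in $H$ by index two.

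For 1), I will compute the triple products. Since $A\star A$ consists of products of elements of $\Delta\subseteq H$, we get $a\star(b\star c)=1$ whenever $b\star c$ is a single element. The elements $b\star c$ are $1$ or elements of $\Delta$, so they lie in $H$. Hence $(a\star b)\star c=1$ always, because $a\star b\in H$.

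For $a\star(b\star c)$, this is $1$ if $a\in H$ or $b\in H$. If $a,b\notin H$, it equals $\alpha(d)d^{-1}$ with $d=\alpha(c)c^{-1}$ ranging over all of $\Delta$. So associativity is equivalent to $\alpha(d)=d$ for all $d\in\Delta$, i.e.\ $\Delta\subseteq\Fix(\alpha)$. For $d=\alpha(c)c^{-1}$ we have $\alpha(d)=c\,\alpha(c)^{-1}=d^{-1}$. Thus $\alpha(d)=d$ if and only if $d^2=1$, and in that case all triple products vanish.

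For 2), I will use Lemma \ref{lem:conditiontwosided}: two-sidedness means $(ab)\star c=b^{-1}(a\star c)b(b\star c)$ for all $a,b,c$. I split into four cases by membership of $a,b$ in $H$, writing $d_c=\alpha(c)c^{-1}$.
\begin{itemize}
\item[(i)] If $a,b\in H$, both sides are trivial.
\item[(ii)] If $a\in H$ and $b\notin H$, then $ab\notin H$. The condition becomes $d_c=d_c$, which is automatic.
\item[(iii)] If $a\notin H$ and $b\in H$, then $ab\notin H$. The condition becomes $d_c=b^{-1}d_cb$, which is the first condition, for all $d\in\Delta$ and $b\in H$.
\item[(iv)] If $a,b\notin H$, then $ab\in H$. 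The condition becomes $1=b^{-1}d_cbd_c$, i.e.\ $b^{-1}db=d^{-1}$, which is the second condition.
\end{itemize}
Since $c$ ranges over $G$, $d_c$ ranges over $\Delta$, which proves the criterion.

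Finally, suppose $\star$ is associative. Then $d^2=1$, so $d^{-1}=d$. The two conditions together say that $d$ commutes with every $b\in G$, i.e.\ $\Delta\subseteq Z(G)$. The only mildly delicate step is the compatibility identity $\lambda_{a\bul b}=\lambda_a\lambda_b$, where $\alpha(H)=H$ is essential; everything else is bookkeeping over cosets of $H$.
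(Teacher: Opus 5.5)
Your proof is correct and follows essentially the same route as the paper. You use the parity/coset argument to get $\lambda_{a\bul b}=\lambda_a\lambda_b$ (the paper phrases this as $\lambda$ factoring through $G/H\cong C_2$ with $\lambda_{\lambda_a(b)}=\lambda_b$), the identity $\alpha(d)=d^{-1}$ for $d\in\Delta$ to reduce associativity to $d^2=1$, and the same four-case check of Lemma~\ref{lem:conditiontwosided}.

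The only thing to fix is the stray sentence asserting $a\star(b\star c)=1$ ``whenever $b\star c$ is a single element''. As written it is false: you yourself compute $a\star(b\star c)=\alpha(d)d^{-1}$ for $a,b\notin H$. It should instead be the claim $(a\star b)\star c=1$, which you correctly prove in the next two lines.
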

\begin{proof}
The map $\lambda$ is the composite of the quotient of groups $G\to G/H\cong C_2$ and the morphism of groups $C_2\to\Aut(G)$ sending
its generator to $\alpha$. This description also covers
$\alpha=\mathrm{Id}$. Since $\alpha$ preserves the two cosets of $H$,
$\lambda_{\lambda_a(b)}=\lambda_b$. As a consequence, $\lambda_{a\bul b}
=\lambda_{a\cdot\lambda_a(b)}=\lambda_a\lambda_b$ and
\[
(a\bul b)\bul c
=a\cdot\lambda_a(b)\cdot\lambda_a\lambda_b(c)
=a\cdot\lambda_a(b\bul c)=a\bul(b\bul c).
\]
The identity is $1$. The element
$\overline a=\lambda_a^{-1}(a^{-1})$ is a right inverse for $a$, and
$\lambda_{\overline a}=\lambda_a^{-1}$ shows that it is also a left
inverse. The compatibility condition \eqref{eq:brace-axiom} follows because each $\lambda_a$ is
an automorphism of $(G,\cdot)$, and \eqref{defstarproduct} is immediate.

\noindent 1). Write $\delta(c)=\alpha(c)\cdot c^{-1}$. The induced automorphism of
$G/H\cong C_2$ is the identity, so $\delta(c)\in H$ for every $c$. Moreover $\alpha(\delta(c))=c\cdot\alpha(c)^{-1}=\delta(c)^{-1}$. Therefore, $(a\star b)\star c=1$ for all $a,b,c\in G$, while
\[
a\star(b\star c)=
\begin{cases}
\delta(c)^{-2},&a,b\notin H,\\
1,&a\in H\text{ or }b\in H.
\end{cases}
\]

\noindent 2). We apply Lemma~\ref{lem:conditiontwosided}.
Its identity holds automatically when $a\in H$. When $a\notin H$
and $b\in H$, it becomes $d=b^{-1}\cdot d\cdot b$, with $d=\delta(c)$.
When $a,b\notin H$, it becomes $1=b^{-1}\cdot d\cdot b\cdot d$. These are precisely
the stated conditions. Under the associativity assumption, $d=d^{-1}$, so together they require $\Delta\subseteq Z(G)$.
\end{proof}

\begin{example}\label{ex:assnottwosided}
Let $G=\langle r,s\ |\ r^{8}=s^{2}=1,\ srs=r^{5}\rangle$ be the modular group of order 16. Let $H=\langle r^{2},s\rangle$ and $\alpha\in\mathrm{Aut}(G)$ be given by $\alpha(r)=rs$, $\alpha(s)=s$. Then $\alpha^{2}=\mathrm{id}$, $\alpha(H)=H$ and
\[
\Delta=\{1,s,r^{4},r^{4}s\}\subseteq H\cap\mathrm{Fix}(\alpha),
\]
so all triples are trivial and $\star$ is associative. It is \textit{not} two-sided: for $a=b=c=r\notin H$ one has $a\star c=b\star c=rsr^{-1}=r^{4}s$ and
\[
(a\cdot b)\star c=r^{2}\star r=1,\quad  b^{-1}\cdot(a\star c)\cdot b\cdot(b\star c)=r^{7} r^{4}srr^{4}s=r^{4}.
\]
We observe that $Z(G)=\{1,r^2,r^4,r^6\}$.
\end{example}


\section{Applications for two-sided skew braces}\label{sec:applications}

Unless otherwise stated, throughout this section $(A,\cdot,\bullet)$ is a two-sided skew brace with associative $\star$-product. By Theorem \ref{thm:extension}, we have that $A\star A\subseteq Z(A^{\cdot})$. Therefore, the following equalities hold:
\begin{align}
    a\star(b\cdot c)&=(a\star b)\cdot(a\star c),\label{eq:distrleft}\\
    (a\cdot b)\star c&=(a\star c)\cdot(b\star c).\label{eq:distrright}
\end{align}
Thus, for any $a,c\in A$, we have morphisms of groups $a\star(-),(-)\star c:A^{\cdot}\to Z(A^{\cdot})$. Hence, defining $D:=[A^{\cdot},A^{\cdot}]$, we get $A\star D=\{1\}=D\star A$ or, equivalently, $D\subseteq\mathrm{ker}(\lambda)\cap\mathrm{Fix}(\lambda)$. Clearly $D$ is an ideal of the skew brace $(A,\cdot,\bullet)$. In fact, it is normal in $A^{\cdot}$ and $\lambda_{a}(d)=d$, for all $a\in A$ and $d\in D$. Therefore, for all $a\in A$ and $d\in D$, we get $a\bul d=a\cdot\lambda_{a}(d)=a\cdot d$ and $d\bul a=d\cdot\lambda_{d}(a)=d\cdot a$. The two operations then coincide on $D$. We get 
\[
a\bul d\bullet\overline{a}=(a\cdot d)\bul\overline{a}=(a\cdot d\cdot a^{-1}\cdot a)\bul\overline{a}=(a\cdot d\cdot a^{-1})\bul a\bul\overline{a}=a\cdot d\cdot a^{-1}\in D,
\]
hence $D$ is normal in $A^{\bul}$. 

Recalling that a skew brace $(A,\cdot,\bullet)$ is \textit{semiprime} if, for every ideal $K$, the condition $K\star K=\{1\}$ implies $K=\{1\}$, we get the following result.

\begin{corollary}
Let $A$ be a semiprime two-sided skew brace. Then $\star$ is associative if and only if $A^{\cdot}$ is abelian.
\end{corollary}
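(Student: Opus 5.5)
Both implications rest on the fact recorded at the start of this section: if $A$ is two-sided with associative $\star$, then $D=[A^{\cdot},A^{\cdot}]$ is an ideal with $A\star D=\{1\}=D\star A$. The easy direction is that an abelian $A^{\cdot}$ forces associativity. In that case $A$ is a two-sided brace, so Theorem \ref{thm:rump-lau} applies. Equivalently, condition 3) of Theorem \ref{thm:extension} holds trivially, and together with 1) this yields 2).

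For the converse, assume $A$ is semiprime, two-sided, and $\star$ is associative. The ideal $D$ satisfies
\[
D\star D\subseteq A\star D=\{1\},
\]
so semiprimeness forces $D=\{1\}$. Hence $[A^{\cdot},A^{\cdot}]=1$, that is, $A^{\cdot}$ is abelian.

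The only point needing care is that $D$ really is an ideal and that $A\star D=\{1\}$. Both were derived just before the statement, from the bi-distributivity \eqref{eq:distrleft} and \eqref{eq:distrright}, which in turn rest on $A\star A\subseteq Z(A^{\cdot})$ given by Theorem \ref{thm:extension}. The key facts are that $a\star(-)\colon A^{\cdot}\to Z(A^{\cdot})$ is a homomorphism into an abelian group, so it kills commutators, and that $D$ is normal in $A^{\bul}$ with $\lambda_a(D)\subseteq D$. Once these are cited, the argument is immediate, and I expect no real obstacle.
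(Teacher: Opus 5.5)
Your proposal is correct and follows essentially the same route as the paper. For the forward direction you show that $D=[A^{\cdot},A^{\cdot}]$ is an ideal with $D\star D\subseteq A\star D=\{1\}$, and semiprimeness then forces $D=\{1\}$. For the converse you observe that an abelian $A^{\cdot}$ makes $A$ a two-sided brace, so Theorem~\ref{thm:rump-lau} gives associativity.
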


\begin{proof}
If $\star$ is associative then we have that $D=[A^{\cdot},A^{\cdot}]$ is an ideal of $A$ satisfying $A\star D=\{1\}=D\star A$, hence $D\star D=\{1\}$. Since $A$ is semiprime, we get $D=\{1\}$, hence $A^{\cdot}$ is abelian. 

The other direction does not need the fact that $A$ is semiprime and is exactly Theorem \ref{thm:rump-lau}.
\end{proof}

On the quotient $R:=A/D$ we can define
\[
(aD)+(bD):=(a\cdot b)D,\qquad (aD)\cdot(bD):=(a\star b)D.
\]
The operation $+$ is abelian, the multiplication of $R$ is associative because it is induced by $\star$, and the two distributivities follow by \eqref{eq:distrleft} and \eqref{eq:distrright}. Finally, the Jacobson operation $(aD)\bullet(bD)=aD+(aD)\cdot(bD)+bD=(a\cdot(a\star b)\cdot b)D=(a\bullet b)D$, so we recover precisely $A^{\bullet}/D$. We have that $(R,+,\cdot)$ is a radical ring. We write $\pi:A\to R$ for the quotient map, viewed as a morphism of groups from $A^{\cdot}$ to $R^{+}$.

\begin{proposition}\label{app:bilinear}
The map
\[
\beta:R^{+}\times R^{+}\longrightarrow Z(A^{\cdot}),
\ (aD,bD)\longmapsto a\star b,
\]
is well-defined and biadditive. Moreover, for all $r,s,t\in R$,
\[
\pi(\beta(r,s))=rs,\qquad
\beta(rs,t)=\beta(r,st).
\]
There is a canonical
surjective morphism of abelian groups
\[
\widetilde\beta:
R^{+}\otimes_{\mathbb Z}R^{+}
\longrightarrow A\star A,\ (aD)\otimes(bD)\longmapsto a\star b.
\]
\end{proposition}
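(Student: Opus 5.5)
The plan is to verify each claim directly from the facts established at the start of this section: $A\star A\subseteq Z(A^{\cdot})$, the two distributivity laws \eqref{eq:distrleft} and \eqref{eq:distrright}, and $A\star D=\{1\}=D\star A$.

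\emph{Well-definedness.} Suppose $aD=a'D$ and $bD=b'D$, so $a'=a\cdot d$ and $b'=b\cdot e$ with $d,e\in D$. Expanding with \eqref{eq:distrright} and then \eqref{eq:distrleft} gives
\[
a'\star b'=(a\star b)(a\star e)(d\star b)(d\star e).
\]
The last three factors vanish because $A\star D=\{1\}=D\star A$, so $a'\star b'=a\star b$. The value lies in $Z(A^{\cdot})$ by Theorem~\ref{thm:extension}.

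\emph{Biadditivity.} Addition in $R$ is induced by $\cdot$, so additivity in each variable is exactly \eqref{eq:distrright} and \eqref{eq:distrleft}. For instance, $\beta(aD+a'D,bD)=(a\cdot a')\star b=(a\star b)\cdot(a'\star b)$.

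\emph{The two identities.} The equality $\pi(\beta(r,s))=rs$ is the definition of multiplication in $R$: $\pi(a\star b)=(a\star b)D=(aD)(bD)$. For the associativity-type identity, take $r=aD$, $s=bD$, $t=cD$. Then $rs=(a\star b)D$, so by well-definedness $\beta(rs,t)=(a\star b)\star c$. Likewise $\beta(r,st)=a\star(b\star c)$. These agree because $\star$ is associative. The only care needed is that $\beta(rs,t)$ may be computed using any representative of $rs$; well-definedness covers exactly this point.

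\emph{The tensor map.} Since $\beta$ is biadditive into the abelian group $Z(A^{\cdot})$, the universal property of $\otimes_{\mathbb Z}$ gives a unique homomorphism $\widetilde\beta\colon R^{+}\otimes_{\mathbb Z}R^{+}\to Z(A^{\cdot})$ with $r\otimes s\mapsto\beta(r,s)$. Its image is the subgroup generated by all $a\star b$, which is $A\star A$ by definition. So $\widetilde\beta$ corestricts to a surjection onto $A\star A$, which is abelian as a subgroup of $Z(A^{\cdot})$.

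There is no real obstacle here. The one point to state carefully is the use of $D\star A=A\star D=\{1\}$ in the well-definedness step; it follows from $a\star(-)$ and $(-)\star c$ being homomorphisms into an abelian group, which therefore kill commutators.
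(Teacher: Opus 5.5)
Your proposal is correct and follows essentially the same route as the paper: the same four-factor expansion $(a\cdot d)\star(b\cdot e)=(a\star b)(a\star e)(d\star b)(d\star e)$ via the two distributive laws and $A\star D=D\star A=\{1\}$ for well-definedness and biadditivity, associativity of $\star$ for $\beta(rs,t)=\beta(r,st)$, and the universal property of $\otimes_{\mathbb Z}$ for $\widetilde\beta$. Surjectivity then holds because the values $a\star b$ generate $A\star A$.
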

\begin{proof}
For $a,b\in A$ and $d,d'\in D$, equations \eqref{eq:distrleft} and \eqref{eq:distrright} and
$A\star D=D\star A=\{1\}$ give
\[
(a\cdot d)\star(b\cdot d')
=(a\star b)\cdot(a\star d')\cdot(d\star b)\cdot(d\star d')
=a\star b.
\]
This proves well-definedness and biadditivity follows from the same identities.
The first displayed equality is the definition of multiplication in $R$. The second one follows by associativity of $\star$.

We apply the universal property of the tensor product to the biadditive
map $\beta$ getting the displayed morphism of abelian groups $\tilde{\beta}$.

Its values generate $A\star A$,
which proves surjectivity. 
\end{proof}

\begin{corollary}\label{app:tensor}
The following statements hold:
\begin{enumerate}[label=\textup{\arabic*)}]
\item If
$\operatorname{Hom}_{\mathbb Z}
(R^{+}\otimes_{\mathbb Z}R^{+},Z(A^{\cdot}))=0$,
then $A$ is a trivial skew brace. In particular, this holds when $R^{+}$
is torsion and $Z(A^{\cdot})$ is torsion-free.
\item If $R^{+}$ is finite, then $A\star A$ is finite and
its exponent divides the exponent of $R^{+}$.
\end{enumerate}
\end{corollary}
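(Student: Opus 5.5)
Both parts will come from Proposition~\ref{app:bilinear}: the surjection $\widetilde\beta:R^{+}\otimes_{\mathbb Z}R^{+}\to A\star A$, together with the standing fact $A\star A\subseteq Z(A^{\cdot})$ given by Theorem~\ref{thm:extension}.

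For 1), compose $\widetilde\beta$ with the inclusion $A\star A\hookrightarrow Z(A^{\cdot})$. This is a morphism of abelian groups $R^{+}\otimes_{\mathbb Z}R^{+}\to Z(A^{\cdot})$, so by hypothesis it is zero. Since $\widetilde\beta$ is surjective onto $A\star A$, we get $A\star A=\{1\}$. This says $\lambda_a(b)=b$ for all $a,b$, so $a\bullet b=a\cdot b$ and $A$ is trivial. For the special case, recall that a tensor product of torsion abelian groups is torsion. Indeed, if $nr=0$ then $n(r\otimes s)=(nr)\otimes s=0$, so every simple tensor is torsion, and simple tensors generate. Hence $R^{+}\otimes R^{+}$ is torsion, and any homomorphism from it to a torsion-free group vanishes, since the image of an element of order $n$ has order dividing $n$.

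For 2), suppose $R^{+}$ is finite with exponent $e$. Then $R^{+}\otimes_{\mathbb Z}R^{+}$ is finitely generated by the finitely many simple tensors $r\otimes s$. Each of these satisfies $e(r\otimes s)=(er)\otimes s=0$, so the tensor product is a finitely generated abelian group of exponent dividing $e$, hence finite. The surjective image $A\star A$ is therefore finite, and its exponent divides $e$. One can also check this directly without the tensor product: by biadditivity of $\beta$, $(a\star b)^{e}=\beta(e\cdot aD,bD)=\beta(0,bD)=1$, and $A\star A$ is abelian and generated by the finitely many values of $\beta$.

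None of these steps is a real obstacle. The only point needing care is that the composite with the inclusion into $Z(A^{\cdot})$ is a genuine group homomorphism, and this is ensured because $A\star A$ is a subgroup of the abelian group $Z(A^{\cdot})$.
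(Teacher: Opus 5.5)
Your proof is correct and takes the same approach as the paper. The paper gives no separate proof of this corollary and presents it as an immediate consequence of the surjection $\widetilde\beta\colon R^{+}\otimes_{\mathbb Z}R^{+}\to A\star A\subseteq Z(A^{\cdot})$ from Proposition~\ref{app:bilinear}; your torsion and exponent arguments supply exactly the routine details it leaves implicit.
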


\begin{remark}\label{app:central-kernel}
Although $D$ is annihilated on both sides by $\star$, it need not be
contained in $Z(A^{\cdot})$: a trivial skew brace on a group whose derived
subgroup is not central provides an example.
In contrast, put $B:=A\star A$ and $I:=B\cap D$. Then
\[
I\subseteq Z(A^{\cdot})\cap\Ker(\lambda)\cap\Fix(\lambda)=\mathrm{Ann}(A).
\]
The operations $x+y:=x\cdot y$ and $xy:=x\star y$ make $B$ a radical
ring, and restriction of $\pi$ gives an exact sequence of rings
\[
0\longrightarrow I\longrightarrow B\longrightarrow R^2
\longrightarrow0,
\qquad BI=IB=0.
\]
Indeed, $B$ is central in $A^{\cdot}$ and is closed under $\star$.
It is also a subbrace: for $b\in B$,
$\lambda_b(a)=(b\star a)\cdot a$ induces the identity on
$A^{\cdot}/B$, so both $\lambda_b$ and its inverse preserve $B$;
hence $\overline b=\lambda_b^{-1}(b^{-1})\in B$.
The restricted operations therefore give a radical ring.
The image of $B$ under $\pi$ is $R^2$, the kernel is $I$, and the
annihilation assertion follows from $I\subseteq D$.
\end{remark}

With $B$, $D$ and $I$ defined as above, we set
\[
T:=A/B,\qquad R:=A/D,\qquad C:=A/(BD).
\]
We know that $T$ is a trivial skew brace and here $R$ is the skew brace associated
with the radical ring defined above. We are now able to describe $A/I$ through $T$, $R$ and $C$.

\begin{proposition}\label{app:fibre-product}
We have that $C\cong R/R^2$ is a trivial
brace. The natural maps to $C$ give a
canonical isomorphism of skew braces
\[
A/I\ \cong\ T\times_C R.
\]
In particular, $A$ is a central extension of this fibre product, with
kernel $I\subseteq\operatorname{Ann}(A)$.
\end{proposition}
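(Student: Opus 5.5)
The plan is to realize both sides as quotients of $A$ by ideals and then use a group-theoretic ``Chinese remainder'' argument. First, $B=A\star A$ and $D=[A^{\cdot},A^{\cdot}]$ are ideals of $A$: $B$ by the preliminaries, and $D$ by the discussion at the beginning of this section. Hence $BD$ and $I=B\cap D$ are also ideals, being the product and intersection of ideals. So $T$, $R$, $C$ and $A/I$ are skew braces, and all the natural maps among them are quotient morphisms of skew braces.

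\emph{Identification of $C$.} By Remark~\ref{app:central-kernel}, $\pi(B)=R^2$, so $R^2=BD/D$. The third isomorphism theorem then gives $R/R^2\cong (A/D)/(BD/D)\cong A/(BD)=C$, and this is an isomorphism of skew braces because it is induced by quotient maps. Next, $C$ is a trivial brace. Its additive group is abelian since $D\subseteq BD$, and all $\star$-products vanish in $C$ since $A\star A=B\subseteq BD$. Thus $x\bullet y=x\cdot y$ in $C$.

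\emph{The isomorphism.} Define $\varphi\colon A\to T\times R$ by $a\mapsto (aB,aD)$. It is a morphism of skew braces into the direct product, and its image lies in the fibre product $T\times_C R=\{(aB,a'D)\mid aBD=a'BD\}$. This fibre product is a sub-skew brace of $T\times R$, since it is the equalizer of two skew brace morphisms to $C$. We have $\ker\varphi=B\cap D=I$.

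The remaining step is surjectivity onto the fibre product, which I expect to be the only point needing a little care. Take $(aB,a'D)$ with $aBD=a'BD$. Then $a'=a\cdot b\cdot d$ for some $b\in B$ and $d\in D$, using that $BD$ is a subgroup because $B$ and $D$ are normal. Put $x=a\cdot b$. Then $xB=aB$ and $xD=a\cdot b\cdot D=a'D$, so $\varphi(x)=(aB,a'D)$. This gives the canonical isomorphism $A/I\cong T\times_C R$ induced by the natural maps.

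\emph{Central extension.} Finally, Remark~\ref{app:central-kernel} gives $I\subseteq Z(A^{\cdot})\cap\Ker(\lambda)\cap\Fix(\lambda)=\operatorname{Ann}(A)$. Therefore $A\to A/I\cong T\times_C R$ is a central extension with kernel contained in the annihilator.
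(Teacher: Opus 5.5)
Your proposal is correct and follows essentially the same route as the paper. Both arguments identify $C$ via $\pi(B)=R^2$, map $a\mapsto(aB,aD)$ into the fibre product with kernel $B\cap D=I$, prove surjectivity by writing $a^{-1}a'=b\,d$ and taking $x=a\cdot b$, and read off the annihilator condition from Remark~\ref{app:central-kernel}; you additionally spell out why $C$ is a trivial brace ($D\subseteq BD$ and $A\star A\subseteq BD$), which the paper leaves implicit.
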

\begin{proof}
The product $BD$ is an ideal of $A$ since $B$ and $D$ are ideals, and its
image in $R$ is $R^2$. This proves $C\cong R/R^2$. Let $q_T:T\to C$ and $q_R:R\to C$ be the natural quotient maps. The
fibre product is the subbrace
\[
T\times_C R=\{(t,r)\in T\times R\mid q_T(t)=q_R(r)\}.
\]
The morphism $A\to T\times_{C}R:a\mapsto(aB,aD)$ has kernel $I$. To see that it is onto, take $(xB,yD)$ with
$xBD=yBD$. Write $x^{-1}y=bd$ with $b\in B$, $d\in D$.
Then $a=xb$ satisfies $aB=xB$ and $aD=yD$.
The first isomorphism theorem now gives the asserted isomorphism of skew braces.
The centrality of the kernel follows from
Remark~\ref{app:central-kernel}.
\end{proof}

Now we define
\[
P_1(A):=A,\qquad
P_n(A):=\langle a_1\star\cdots\star a_n\mid a_i\in A\rangle_{\cdot}
\quad n\geq2.
\]
Associativity of $\star$ makes the placement of parentheses irrelevant. With $n\geq2$, we have
\[
\{1\}\subseteq\cdots\subseteq P_{n+1}(A)\subseteq\cdots\subseteq P_{2}(A)=A\star A\subseteq Z(A^{\cdot})\subseteq P_{1}(A)=A.
\]
\begin{remark}\label{rmk:identityPistar}
We observe that $P_{i}(A)\star P_{j}(A)=P_{i+j}(A)$. Given an arbitrary element $u=\prod_{\alpha}{u_{\alpha}^{\epsilon_{\alpha}}}\in P_{i}(A)$, where $\epsilon_{\alpha}\in\{\pm1\}$ and $u_{\alpha}$ are $\star$-products of length $i$, and an arbitrary element $v=\prod_{\beta}{v_{\beta}^{\eta_{\beta}}}\in P_{j}(A)$, where $\eta_{\beta}\in\{\pm1\}$ and $v_{\beta}$ are $\star$-products of length $j$, by \eqref{eq:distrleft} and \eqref{eq:distrright} we get $u\star v=\prod_{\alpha,\beta}(u_{\alpha}\star v_{\beta})^{\epsilon_{\alpha}\eta_{\beta}}$ which is in $P_{i+j}(A)$ as any factor belongs to it by associativity of $\star$. On the other hand, any generator of $P_{i+j}(A)$ can be written as $(a_{1}\star\cdots\star a_{i})\star(a_{i+1}\star\cdots\star a_{i+j})$, which belongs to $P_{i}(A)\star P_{j}(A)$. 
\end{remark}

We can now compare $P_{n}(A)$ with left, right and strong $\star$-series introduced in \cite{Rump} for braces and later extended to all skew braces in \cite{CSV}. More explicitly, set $L_1=Q_1=S_1=A$ and for $n\geq1$
\begin{gather*}
L_{n+1}=A\star L_n,\qquad Q_{n+1}=Q_n\star A,\\
S_{n+1}=\left\langle S_i\star S_{n+1-i}\mid1\leq i\leq n
\right\rangle_{\cdot}.
\end{gather*}

The identity $P_i(A)\star P_j(A)=P_{i+j}(A)$ proved in Remark \ref{rmk:identityPistar} gives the
three equalities of series by induction. In particular, left, right and strong $\star$-nilpotence of $A$ are
equivalent.

We obtain the following result:

\begin{proposition}\label{prop:powers}
For every $n\geq 1$, there is a short
exact sequence of groups
\begin{equation}\label{exactsequence}
1\longrightarrow P_{n}(A)\cap D\longrightarrow P_{n}(A)\overset{\pi}\longrightarrow R^{n}\longrightarrow1,
\end{equation}
where $R^n$ is regarded as an additive group. 

Moreover, $P_n(A)$ is an ideal of $A$ and, for $n\geq 2$, the operations
\[
x+y:=x\cdot y,\qquad xy:=x\star y
\]
make $P_n(A)$ into a radical ring, and \eqref{exactsequence} is indeed a short exact sequence of rings, where $R^n$ carries the ring structure induced from $R$. The kernel
$P_n(A)\cap D$ is annihilated on both sides by $A$ under $\star$; in
particular, it has zero multiplication.
\end{proposition}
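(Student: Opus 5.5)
The plan is to identify $\pi(P_n(A))$ with $R^n$ directly, and then read off every remaining claim from the centrality of $A\star A$ and from the relations $A\star D=D\star A=\{1\}$.

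\emph{Step 1: surjectivity onto $R^n$.} For $n=1$ we have $\pi(A)=R=R^1$, and the kernel is $D$, so the sequence is just the quotient sequence. For $n\geq2$, the ring product on $R$ is induced by $\star$, so
\[
\pi(a_1\star\cdots\star a_n)=\pi(a_1)\cdots\pi(a_n).
\]
The additive group $R^n$ is generated by such $n$-fold products of elements of $R$. The subgroup $P_n(A)$ is generated in $A^{\cdot}$ by the $n$-fold $\star$-products, and $\pi$ is a group morphism $A^{\cdot}\to R^+$. Hence $\pi(P_n(A))=R^n$. The kernel of $\pi|_{P_n(A)}$ is $P_n(A)\cap D$, which gives exactness of \eqref{exactsequence}.

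\emph{Step 2: $P_n(A)$ is an ideal.} For $n\geq2$ we know $P_n(A)\subseteq A\star A\subseteq Z(A^{\cdot})$, so $P_n(A)$ is normal in $A^{\cdot}$. For $\lambda$-invariance I would use $\lambda_a(u)=(a\star u)\cdot u$. By Remark~\ref{rmk:identityPistar}, $a\star u\in P_1\star P_n=P_{n+1}\subseteq P_n$, so $\lambda_a(P_n)\subseteq P_n$. Equality then follows by applying the same argument to $\overline a$.

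Normality in $A^{\bul}$ is the step I expect to need the most care, and I would try the formula $a\bul u\bul\overline a=a\cdot\lambda_a(u)\cdot a^{-1}$ (after which centrality and $\lambda_a(P_n)\subseteq P_n$ conclude). To verify this formula I would use (4) of Lemma~\ref{lem:fund}:
\[
a\bul u\bul\overline a=a\cdot\lambda_a\bigl(u\cdot(u\star\overline a)\bigr)\cdot a^{-1},
\]
where $u\star\overline a\in P_{n+1}\subseteq P_n$. This already lies in $P_n$ by centrality and $\lambda$-invariance, whether or not the simplification holds.

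\emph{Step 3: ring structure.} For $n\geq2$, $P_n(A)$ is central, hence abelian under $\cdot$, and it is closed under $\star$ since $P_n\star P_n=P_{2n}\subseteq P_n$. The distributive laws are \eqref{eq:distrleft} and \eqref{eq:distrright}, and associativity holds by hypothesis. To see that the ring is radical, I would argue as in Remark~\ref{app:central-kernel}. Since $P_n$ is an ideal, it is a subbrace, so $\overline u\in P_n$. The Jacobson operation $u+uv+v=u\cdot(u\star v)\cdot v=u\bul v$ then shows that $\overline u$ is the quasi-inverse of $u$. The map $\pi$ sends $\cdot$ to $+$ and $\star$ to the product of $R$, so \eqref{exactsequence} is a sequence of rings.

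\emph{Step 4: the kernel.} Finally, $P_n(A)\cap D\subseteq D$ and $A\star D=D\star A=\{1\}$. Hence the kernel is annihilated on both sides by $A$, and in particular its multiplication is zero.
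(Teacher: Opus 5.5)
Your proof is correct. Steps 1, 3 and 4 follow the paper's argument essentially verbatim. The one genuine difference is Step 2.

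The paper gets the ideal property by noting that $P_n(A)$ coincides with the $n$-th term of the right $\star$-series, a consequence of Remark~\ref{rmk:identityPistar}. It then quotes \cite[Proposition~2.1]{CSV}, which holds for arbitrary skew braces. You instead check the three ideal conditions directly:
\begin{enumerate}
\item centrality of $P_n(A)$ for $n\geq 2$ gives normality in $A^{\cdot}$;
\item the identity $\lambda_a(u)=(a\star u)\cdot u$, with $a\star u\in P_1(A)\star P_n(A)=P_{n+1}(A)\subseteq P_n(A)$, gives $\lambda$-invariance;
\item (4) of Lemma~\ref{lem:fund}, with $u\star\overline a\in P_n(A)\star P_1(A)=P_{n+1}(A)$, gives normality in $A^{\bullet}$.
\end{enumerate}

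Your version is self-contained and shows why the ideal property holds here: both one-sided $\star$-products push $P_n(A)$ into $P_{n+1}(A)$. The cost is a short computation that uses the centrality and two-sided distributivity available in this section. The paper's citation is quicker but relies on an external general result.

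One small point: drop the formula $a\bullet u\bullet\overline a=a\cdot\lambda_a(u)\cdot a^{-1}$ that you first propose to try. Since $\lambda_a$ is an automorphism, it is equivalent to $u\star\overline a=1$ for all $u\in P_n(A)$ and $a\in A$. It therefore fails as soon as $P_{n+1}(A)\neq\{1\}$. Keep only the version with $u\cdot(u\star\overline a)$, which is what your argument actually uses.
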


\begin{proof}
The projection $\pi\colon A\to R$ preserves the $\star$-product.
Hence it sends the generators of $P_n(A)$ onto the additive generators
of $R^n$, and therefore $\pi(P_n(A))=R^n$.
Its kernel on $P_n(A)$ is $P_n(A)\cap D$, which gives the short exact sequence of groups \eqref{exactsequence}. 


Moreover, each $P_n(A)$ is an ideal of $(A,\cdot,\bullet)$, since the terms of the right $\star$-series are ideals by \cite[Proposition 2.1]{CSV}.
For $n\geq2$, its additive group is abelian. Furthermore, $P_n(A)\star P_n(A)=P_{2n}(A)\subseteq P_n(A)$, and the two distributive laws and associativity of $\star$ are
inherited from $A$. Since $P_n(A)$ is a subbrace, these operations
make it a radical ring.

The restriction of $\pi$ is a morphism of rings with image $R^n$ and
kernel $P_n(A)\cap D$, making \eqref{exactsequence} a short exact sequence of rings. Finally, we have $A\star D=D\star A=\{1\}$, so $P_n(A)\cap D$ is annihilated on both sides by every element of
$A$ under $\star$.
\end{proof}

\begin{corollary}\label{app:power-series}
For every $m\geq1$, we have 
\[
P_m(A)=\{1\}\ \Longrightarrow\ R^m=0
\ \Longrightarrow\ P_{m+1}(A)=\{1\}.
\]
Therefore, if
\[
\nu_A:=\min\{n\geq1:P_n(A)=\{1\}\},\qquad
\nu_R:=\min\{n\geq1:R^n=0\}
\]
are finite, then $\nu_R\leq\nu_A\leq\nu_R+1$.
\end{corollary}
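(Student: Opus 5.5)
The plan is to read both implications off the short exact sequence \eqref{exactsequence} of Proposition~\ref{prop:powers}, and then use Remark~\ref{rmk:identityPistar} to push one step further down the series.

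For the first implication, suppose $P_m(A)=\{1\}$. Since $\pi(P_m(A))=R^m$ by \eqref{exactsequence}, we get $R^m=\pi(\{1\})=0$. When $m=1$ the statement should be read as $A=\{1\}$, hence $R=0$, and the same argument applies.

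For the second implication, suppose $R^m=0$. Exactness of \eqref{exactsequence} gives $P_m(A)\subseteq D$. By Remark~\ref{rmk:identityPistar},
\[
P_{m+1}(A)=P_m(A)\star P_1(A)=P_m(A)\star A\subseteq D\star A=\{1\},
\]
where $D\star A=\{1\}$ is the annihilation property established at the start of Section~\ref{sec:applications}. The case $m=1$ is the same: $R=0$ means $A=D$, so $P_2(A)=A\star A=D\star A=\{1\}$. One small check is needed here. The set $P_m(A)\star A$ is defined as the subgroup generated by products $u\star a$, and $D\star A$ is generated by trivial elements, so it is indeed trivial.

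For the bounds, apply the first implication with $m=\nu_A$ to get $R^{\nu_A}=0$, hence $\nu_R\le\nu_A$. Apply the second implication with $m=\nu_R$ to get $P_{\nu_R+1}(A)=\{1\}$, hence $\nu_A\le\nu_R+1$. Both minima exist by the finiteness assumption.

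There is no genuine obstacle in this argument. The only point requiring care is using the identity $P_m\star A=P_{m+1}$ from Remark~\ref{rmk:identityPistar}, together with the fact that elements of $D$ annihilate on the left.
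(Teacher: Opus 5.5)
Your proposal is correct and follows essentially the same route as the paper. The paper also reads $\pi(P_m(A))=R^m$ off the exact sequence \eqref{exactsequence} and deduces $P_m(A)\subseteq D$ when $R^m=0$. It then uses $P_{m+1}(A)=P_m(A)\star A$ together with $D\star A=\{1\}$, and applies both implications at the minimal indices to get $\nu_R\leq\nu_A\leq\nu_R+1$.
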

\begin{proof}
If $P_m(A)=\{1\}$, its image $R^m$ is zero. On the other hand, if $R^m=0$,
the exact sequence \eqref{exactsequence} gives $P_m(A)\subseteq D$.
Thus $P_{m+1}(A)=P_m(A)\star A=\{1\}$.
Applying these implications at the least possible indices gives the bounds.
\end{proof}

\begin{remark}
Both bounds can be attained. For a skew brace coming from a nilpotent
radical ring, $D=\{1\}$ and $\nu_A=\nu_R$.
For an almost trivial skew brace on a nonabelian group of nilpotency
class $2$, the $\star$-products are commutators, so
$P_2(A)=D\ne\{1\}$ and $P_3(A)=\{1\}$.
Here $R=A/D$ is a nonzero ring with zero multiplication, giving
$\nu_R=2$ and $\nu_A=3$.
\end{remark}

\begin{corollary}\label{app:finite-abelianization}
If the additive abelianization of $A$ is finite, then $A$ is strongly
$\star$-nilpotent, even if $A$ itself is infinite.
\end{corollary}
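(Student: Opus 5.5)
The plan is to reduce strong $\star$-nilpotence to the vanishing of some $P_n(A)$, and to get that vanishing from the radical ring $R=A/D$. First, recall that the additive abelianization of $A$ is $A^{\cdot}/[A^{\cdot},A^{\cdot}]=A/D$, whose additive group is $R^{+}$. So the hypothesis says exactly that $R$ is a finite radical ring. By the discussion after Remark~\ref{rmk:identityPistar}, the strong $\star$-series satisfies $S_n=P_n(A)$. Hence it suffices to find an $n$ with $P_n(A)=\{1\}$. By Corollary~\ref{app:power-series}, it is even enough to show $R^m=0$ for some $m$, since then $P_{m+1}(A)=\{1\}$.

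The key step is the classical fact that a finite radical ring is nilpotent; I would prove it directly. First, $R$ is Artinian, being finite. Next, $R$ equals its own Jacobson radical: every element is quasi-regular, and every left ideal consists of quasi-regular elements. Then the Jacobson radical of a (not necessarily unital) Artinian ring is nilpotent. A self-contained argument runs as follows. The chain $R\supseteq R^2\supseteq\cdots$ of finite additive groups stabilizes at some $N=R^k$ with $N^2=N$. Suppose $N\neq0$. By finiteness, choose a left ideal $L\subseteq R$ minimal among those with $NL\neq0$. Then pick $x\in L$ with $Nx\neq0$. Since $N(Nx)=Nx\neq0$, minimality forces $Nx=L$, so $x=yx$ for some $y\in N$. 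Quasi-regularity of $-y$ gives $z$ with $-y+z-zy=0$. Then
\[
x=yx=(z-zy)\ldots
\]
and more directly $0=(-y+z-zy)x=-x+zx-zx=-x$, so $x=0$. This contradicts $Nx\neq0$. Therefore $R^k=0$, and so $P_{k+1}(A)=\{1\}$.

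The main obstacle is this nilpotence of finite radical rings, in particular handling the lack of an identity cleanly; the Nakayama-style computation above addresses it. I should double-check that the quasi-inverse convention $x+y+xy=0$ matches the sign choice, adjusting with $z$ on the appropriate side if necessary. The remark ``even if $A$ itself is infinite'' needs no separate argument, because only $R^{+}$ was assumed finite. Note also that Corollary~\ref{app:tensor} shows that $A\star A$ is finite, but this is not needed.
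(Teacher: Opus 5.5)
Your proposal is correct and follows the paper's proof: identify the additive abelianization with the radical ring $R=A/D$, use that a finite radical ring is nilpotent, and conclude via Corollary~\ref{app:power-series} together with $S_n=P_n(A)$. The only difference is that you prove the nilpotence of finite radical rings yourself, with a Nakayama-type argument, whereas the paper cites \cite[Proposition~4.16]{Trappeniers}; your argument is sound, since $(R,\circ)$ is a group and so $-y$ has the left quasi-inverse $z$ you need (the stray incomplete line before the direct computation should be deleted).
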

\begin{proof}
The radical ring $R=A/D$ is finite and therefore nilpotent, see e.g.\ \cite[Proposition~4.16]{Trappeniers}.
The thesis follows from Corollary~\ref{app:power-series}.
\end{proof}

We next study the lower central series for two-sided skew braces $(A,\cdot,\bullet)$ with $\star$ associative. For subgroups $H,K$ of $A^{\cdot}$, write $[H,K]_{\cdot}$ for the
subgroup generated by their group commutators. Let again $\gamma_1(A^{\cdot})=A$ and
$\gamma_{n+1}(A^{\cdot})=[\gamma_n(A^{\cdot}),A]_{\cdot}$ for $n\geq1$. One defines the lower central series of the skew brace $A$ by
\[
\Gamma_1(A):=A,\quad
\Gamma_{n+1}(A):=\left\langle
[\Gamma_n(A),A]_{\cdot},\,
\Gamma_n(A)\star A,\,A\star\Gamma_n(A)
\right\rangle_{\cdot}\ (n\geq1).
\]
The terms $\Gamma_n(A)$ are ideals of $A$, see e.g.\ \cite[Proposition 7.1]{Tsang}, and this series terminates precisely when the upper central series associated with $\mathrm{Ann}(A)$ reaches $A$ \cite[Theorem 2.7]{BP}, see also with the present indexing \cite[Theorem 7.2]{Tsang}.

\begin{theorem}\label{app:lower-central}
For every $n\geq1$,
\[
\Gamma_n(A)=\gamma_n(A^{\cdot})\cdot P_n(A).
\]
Consequently, $A$ is centrally nilpotent if and only if $A^{\cdot}$
is a nilpotent group and $R$ is a nilpotent ring.
If $A\ne\{1\}$ and these conditions hold, let $c_{\cdot}$ be the
nilpotency class of $A^{\cdot}$ and $c_A$ the central nilpotency class
of the skew brace, with class $c$ meaning that the $(c+1)$st term
is the first trivial term. Then
\[
c_A=\max\{c_{\cdot},\nu_A-1\},
\qquad
\max\{c_{\cdot},\nu_R-1\}\leq c_A
\leq\max\{c_{\cdot},\nu_R\}.
\]
\end{theorem}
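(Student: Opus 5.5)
We argue by induction on $n$ that $\Gamma_n(A)=\gamma_n(A^{\cdot})\cdot P_n(A)$. The case $n=1$ is trivial. Set $G_n:=\gamma_n(A^{\cdot})\cdot P_n(A)$. Since $P_n(A)\subseteq Z(A^{\cdot})$ for $n\geq2$, the set $G_n$ is a subgroup, and it is normal in $A^{\cdot}$. Assume $\Gamma_n(A)=G_n$.

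We first compute the three kinds of generators of $\Gamma_{n+1}(A)$.
\begin{enumerate}[label=\textup{(\alph*)}]
\item \emph{Commutators.} For $g\in\gamma_n(A^\cdot)$, $p\in P_n(A)$ and $a\in A$, centrality of $p$ (for $n\geq 2$) and the identities of Remark~\ref{rmk:commutatoridentities} give $[gp,a]=[g,a]$. Hence $[G_n,A]_\cdot=\gamma_{n+1}(A^\cdot)$. For $n=1$ this is just the definition.
\item \emph{Left $\star$-products.} By \eqref{eq:distrright}, $(gp)\star a=(g\star a)\cdot(p\star a)$. Here $p\star a\in P_{n+1}(A)$ by Remark~\ref{rmk:identityPistar}. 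For $n\geq2$ we have $\gamma_n(A^\cdot)\subseteq D$, so $g\star a=1$ because $D\star A=\{1\}$. For $n=1$, the term $A\star A=P_2(A)$ appears directly.
\item \emph{Right $\star$-products.} Symmetrically, using \eqref{eq:distrleft} and $A\star D=\{1\}$, we get $a\star(gp)=a\star p\in P_{n+1}(A)$.
\end{enumerate}
Together these give $\Gamma_{n+1}(A)\subseteq\gamma_{n+1}(A^\cdot)P_{n+1}(A)$. For the reverse inclusion, $\gamma_{n+1}(A^\cdot)=[\gamma_n(A^\cdot),A]\subseteq[\Gamma_n(A),A]$. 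Moreover $P_{n+1}(A)=P_n(A)\star A\subseteq\Gamma_n(A)\star A$, using the coincidence of $P_n$ with the right $\star$-series. This closes the induction.

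The delicate point is the base case $n=1\to2$, where $\gamma_1=A$ is not contained in $D$. There everything works directly: $\Gamma_2(A)=\langle[A,A],A\star A\rangle=\gamma_2(A^\cdot)P_2(A)$. The vanishing $D\star A=A\star D=\{1\}$ is used only from $n\geq2$ on, and that is exactly where it is needed.

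For the consequences, note that a product $\gamma_n(A^\cdot)P_n(A)$ is trivial if and only if both factors are. Hence $\Gamma_{m}(A)=\{1\}$ if and only if $\gamma_m(A^\cdot)=1$ and $P_m(A)=\{1\}$. It follows that central nilpotence of $A$ is equivalent to $A^\cdot$ being nilpotent together with $\nu_A<\infty$. By Corollary~\ref{app:power-series}, $\nu_A<\infty$ is equivalent to $\nu_R<\infty$, that is, to $R$ being nilpotent.

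For the class, the least $m$ with $\Gamma_m=\{1\}$ is $\max\{c_\cdot+1,\nu_A\}$. Therefore $c_A=\max\{c_\cdot,\nu_A-1\}$. The two bounds then follow by substituting $\nu_R\leq\nu_A\leq\nu_R+1$ from Corollary~\ref{app:power-series}. The hypothesis $A\neq\{1\}$ ensures that $c_\cdot$ and the indices are meaningful.
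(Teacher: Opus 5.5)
Your proposal is correct and follows essentially the same route as the paper: you check $\Gamma_2(A)=\gamma_2(A^{\cdot})P_2(A)$ directly, and then, for $n\geq2$, you use centrality of $P_n(A)$ to reduce $[\Gamma_n(A),A]_{\cdot}$ to $\gamma_{n+1}(A^{\cdot})$, and the two distributive laws together with $A\star D=D\star A=\{1\}$ to reduce both $\star$-terms to $P_{n+1}(A)$; the nilpotence criterion and the class bounds are then read off from Corollary~\ref{app:power-series} in the same way. You are slightly more explicit than the paper, spelling out the identity $[gp,a]=[g,a]$, the reverse inclusion, and the monotonicity of both series that underlies the value $\max\{c_{\cdot}+1,\nu_A\}$, but the argument is the same.
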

\begin{proof}
The formula holds for $n=1$, and
$\Gamma_2(A)=D\cdot P_2(A)$.
Suppose it holds for some $n\geq2$.
Since $\gamma_n(A^{\cdot})\subseteq D$ and
$P_n(A)\subseteq Z(A^{\cdot})$, we have
\[
[\gamma_n(A^{\cdot})\cdot P_n(A),A]_{\cdot}
=\gamma_{n+1}(A^{\cdot}).
\]
The two distributivities \eqref{eq:distrleft} and \eqref{eq:distrright}, together with $D\star A=A\star D=\{1\}$,
also give
\begin{align*}
(\gamma_n(A^{\cdot})\cdot P_n(A))\star A
&=P_n(A)\star A=P_{n+1}(A),\\
A\star(\gamma_n(A^{\cdot})\cdot P_n(A))
&=A\star P_n(A)=P_{n+1}(A).
\end{align*}
This proves the induction step. Each product on the right-hand side
is a subgroup because $\gamma_n(A^{\cdot})$ is normal and $P_n(A)$
is central for $n\geq2$.
Thus $\Gamma_n(A)$ is trivial precisely when both factors are
trivial. 

The equivalence and the class formula now follow from
Corollary~\ref{app:power-series}.
\end{proof}

\begin{remark}
For general skew braces, central nilpotence is equivalent to the
combination of left nilpotence, right nilpotence and nilpotence of the
additive group, see \cite[Theorem~5.3]{Tsang}. For two-sided skew braces
of nilpotent type, equivalence of left, right and strong nilpotence is
already known by \cite[Theorem~4.26]{Trappeniers}. Here
Corollary~\ref{app:power-series} identifies the individual terms of the
three $\star$-series, without assuming that $A^{\cdot}$ is nilpotent.
Theorem \ref{app:lower-central} further separates each lower central
term into its group-commutator and $\star$-product factors and gives the
corresponding class formula.
\end{remark}

\begin{remark}
The group-theoretic hypothesis in Theorem \ref{app:lower-central} is necessary:
on a trivial skew brace, $P_2(A)=\{1\}$, whereas $A^{\cdot}$ can be any group. Thus $\star$-nilpotence alone does not imply central
nilpotence.
\end{remark}

We are also able to characterize two-sided skew brace structures with $\star$-product associative on a group $(G,\cdot)$ which is such that $G/[G,G]\cong\mathbb{Z}$.

\begin{proposition}\label{app:cyclic-abelianization}
Let $(G,\cdot)$ be a group such that $G/[G,G]\cong\mathbb Z$, and fix an
abelianization map $\chi:G\to\mathbb Z$. The two-sided skew brace
structures on $(G,\cdot)$ with associative $\star$-product are exactly
\[
a\bul_z b=a\cdot b\cdot z^{\chi(a)\chi(b)},
\qquad z\in Z(G)\cap[G,G].
\]
Each such structure has $P_3(G)=\{1\}$, and its parameter $z$ is
unique. Moreover, the map
\[
F_z:(G,\cdot)\longrightarrow(G,\bul_z),\ g\mapsto g\cdot z^{\binom{\chi(g)}{2}},
\]
is a group isomorphism, where $\binom{m}{2}=\frac{m(m-1)}{2}$ for $m\in \mathbb{Z}$. In particular, if $Z(G)\cap[G,G]=\{1\}$,
the only such skew brace structure is the trivial one.
\end{proposition}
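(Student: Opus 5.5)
Both directions run through the machinery of this section: the biadditive map $\beta$ of Proposition~\ref{app:bilinear} and the radical ring $R=A/D$.

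\textbf{Every structure has the stated form.} Let $(G,\cdot,\bullet)$ be a two-sided skew brace with associative $\star$-product. Then $D=[G,G]$ and $R=G/D\cong\mathbb Z$ as additive groups via $\chi$.

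First I show that $R$ has zero multiplication. The multiplication of $R$ is determined by $1\cdot 1=k$ for some $k\in\mathbb Z$, so $R$ is the ring $k\mathbb Z$ up to rescaling, i.e.\ $m\cdot n=kmn$. It is radical: for $x=1$ there must exist $y$ with $1+y+ky=0$, i.e.\ $y(1+k)=-1$. Hence $k\in\{0,-2\}$. For $k=-2$, take $x=2$: we need $2+y-4y=0$, i.e.\ $3y=2$, which has no integer solution. So $k=0$ and $R^2=0$.

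Next I identify the $\star$-product. Fix $g_0$ with $\chi(g_0)=1$. Since $\beta$ is biadditive on $R^+\cong\mathbb Z$, we get
\[
a\star b=\beta(\chi(a),\chi(b))=z^{\chi(a)\chi(b)},\qquad z:=g_0\star g_0.
\]
Here $z\in Z(G)$ because $A\star A\subseteq Z(A^\cdot)$, and $z\in D$ because $\pi(z)=1\cdot 1=0$ in $R$. Then $a\bullet b=a\cdot(a\star b)\cdot b=a\cdot b\cdot z^{\chi(a)\chi(b)}$ by centrality. The parameter is unique because $z=g_0\star g_0$ is recovered from the structure. Finally $P_3=\{1\}$, since $z\in D$ and $D\star G=\{1\}$; equivalently, $R^2=0$ gives $P_2\subseteq D$ as in Corollary~\ref{app:power-series}.

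\textbf{Every such formula gives a structure.} Given $z\in Z(G)\cap[G,G]$, first check that $\bullet_z$ is a group operation, using $\chi(z)=0$ and centrality of $z$. The operation is associative because both bracketings give $abc\,z^{\chi(a)\chi(b)+\chi(a)\chi(c)+\chi(b)\chi(c)}$. The identity is $1$, and the inverse is $\overline a=a^{-1}z^{\chi(a)^2}$.

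Then verify the two laws. The brace axiom \eqref{eq:brace-axiom} and the two-sided law \eqref{eq:twosided} both reduce to additivity of $\chi$ together with centrality of $z$. For the brace axiom,
\[
a\bullet(bc)=abc\,z^{\chi(a)(\chi(b)+\chi(c))}=(a\bullet b)\,a^{-1}(a\bullet c),
\]
and the right-hand law is symmetric. The star product is $a\star b=z^{\chi(a)\chi(b)}$, which is central. Triple products vanish because $\chi(z)=0$, so $\star$ is associative.

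For $F_z$, compute
\[
F_z(g)\bullet_z F_z(h)=gh\,z^{\binom{\chi(g)}{2}+\binom{\chi(h)}{2}+\chi(g)\chi(h)}=F_z(gh),
\]
using $\binom{m+n}{2}=\binom m2+\binom n2+mn$ and $\chi(z)=0$. $F_z$ is bijective since its inverse is $g\mapsto g\,z^{-\binom{\chi(g)}{2}}$. The final claim about $Z(G)\cap[G,G]=\{1\}$ is immediate.

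\textbf{Main obstacle.} The only non-formal step is showing that $R^2=0$, i.e.\ that the ring $k\mathbb Z$ is radical only when $k=0$. Everything else is direct computation.
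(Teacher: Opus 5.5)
Your proposal is correct and follows the paper's proof essentially step for step. You force $R^2=0$ from the radical-ring condition on $R\cong\mathbb Z$, read off $a\star b=z^{\chi(a)\chi(b)}$ with $z\in Z(G)\cap[G,G]$ from the biadditivity of $\beta$, and check the converse by direct computation. The differences are cosmetic: you rule out $k\neq 0$ using the quasi-inverses of $1$ and $2$ (the paper uses bijectivity of adjoint translation by $\pm e$), and you verify associativity of $\bul_z$ and the two-sided law directly (the paper transports the group law along $F_z$ and invokes Lemma~\ref{lem:conditiontwosided}).
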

\begin{proof}
First suppose that $G$ carries such a skew brace structure. The radical
ring $R=G/[G,G]$ has additive group generated by an element $e$ of
infinite order. Write $e^2=ke$, with $k\in\mathbb Z$. For every
$m\in\mathbb Z$, left translation by $me$ in the adjoint group acts
on integer coordinates as
\[
n\longmapsto m+(1+km)n.
\]
It must be bijective. Taking $m=1$ and $m=-1$ gives
$1+k,1-k\in\{1,-1\}$, hence $k=0$. Therefore, we get $e^{2}=0$ and then $R^2=0$.

Choose $t\in G$ with $\chi(t)=1$ so that $e:=tD$ (with $D:=[G,G]$) generates $R^{+}$. Set $z:=t\star t$. By Proposition \ref{app:bilinear}, we obtain
\[
a\star b=\beta(aD,bD)=\beta(\chi(a)e,\chi(b)e)=\beta(tD,tD)^{\chi(a)\chi(b)}=z^{\chi(a)\chi(b)}.
\]
The element $z$ is central, and its image in $R$ is $e^2=0$, so
$z\in[G,G]$. Since $a\bul b=a\cdot(a\star b)\cdot b$ now gives the
claimed formula and proves uniqueness.

Conversely, take $z\in Z(G)\cap[G,G]$, so $\chi(z)=0$.
The formula for $F_z$ defines a bijection, with inverse
$g\mapsto g\cdot z^{-\binom{\chi(g)}{2}}$. 
The identity
\[
\binom{m+n}{2}=\binom m2+\binom n2+mn
\qquad(m,n\in\mathbb Z)
\]
shows directly that $F_z(a\cdot b)=F_z(a)\bul_z F_z(b)$.
Thus $\bul_z$ is a group operation. The lambda actions are given by $\lambda_a(b)=b\cdot z^{\chi(a)\chi(b)}$.
These are automorphisms of $(G,\cdot)$, with inverses obtained by
changing the sign of the exponent, so \eqref{eq:brace-axiom} holds.
The $\star$-product $a\star b=z^{\chi(a)\chi(b)}$ is central and
distributive in the first argument, then Lemma~\ref{lem:conditiontwosided} proves two-sidedness. Since $\chi(z)=0$, both bracketings
of every triple star-product are $1$, proving associativity and
$P_3(G)=\{1\}$.
\end{proof}

\begin{remark}
The construction from a central biadditive map used in
Proposition~\ref{app:cyclic-abelianization} is a special case of
\cite[Theorem~3.2]{CS}. Here the radical-ring quotient and biadditivity
show that these formulas exhaust all the structures under consideration
when the additive abelianization is infinite cyclic.
\end{remark}

A skew brace $(A,\cdot,\bul)$ is called \emph{bi-skew} if
$(A,\bul,\cdot)$ is also a skew brace. We conclude this paper with a short
consequence of the standard lambda-map criteria for a two-sided skew brace $(A,\cdot,\bullet)$ with $\star$ associative.

\begin{corollary}\label{app:bi-skew}
The following conditions are
equivalent:
\begin{enumerate}[label=\textup{\arabic*)}]
\item $A$ is a bi-skew brace;
\item $\lambda:A^{\cdot}\to\Aut(A^{\cdot})$ is a morphism of groups;
\item $P_3(A)=\{1\}$.
\end{enumerate}
In particular, every structure in
Proposition~\ref{app:cyclic-abelianization} is bi-skew.
\end{corollary}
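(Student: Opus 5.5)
The proof rests on the standard criterion for bi-skew braces: $(A,\cdot,\bullet)$ is bi-skew if and only if $\lambda:A^{\cdot}\to\Aut(A^{\cdot})$ is a morphism of groups, i.e.\ $\lambda_{a\cdot b}=\lambda_a\lambda_b$ for all $a,b$. This criterion appears in the bi-skew literature, e.g.\ Childs' characterization. If one prefers a self-contained argument, note that $(A,\bullet,\cdot)$ satisfies the brace axiom exactly when $a\cdot(b\bullet c)=(a\cdot b)\bullet a^{\bullet-1}\bullet(a\cdot c)$. Expanding $x\bullet y=x\cdot\lambda_x(y)$ and using that $\lambda$ is a $\bullet$-morphism reduces this to $\lambda_{a\cdot b}=\lambda_a\lambda_b$ on the relevant elements. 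I would quote this criterion and thereby obtain 1)$\Leftrightarrow$2) without any use of the standing hypotheses.

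For 2)$\Leftrightarrow$3) I will use the hypotheses of the section: $A\star A\subseteq Z(A^{\cdot})$ and the distributivities \eqref{eq:distrleft}, \eqref{eq:distrright}. Write $\lambda_a(c)=(a\star c)\cdot c$.

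Compute the left side: $\lambda_{a\cdot b}(c)=((a\cdot b)\star c)\cdot c=(a\star c)\cdot(b\star c)\cdot c$ by right distributivity.

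Compute the right side: $\lambda_a\lambda_b(c)=\lambda_a((b\star c)\cdot c)=(a\star((b\star c)\cdot c))\cdot(b\star c)\cdot c$. Left distributivity gives $a\star((b\star c)\cdot c)=(a\star(b\star c))\cdot(a\star c)$. Since all $\star$-products are central, this becomes $\lambda_a\lambda_b(c)=(a\star(b\star c))\cdot(a\star c)\cdot(b\star c)\cdot c$.

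Comparing the two expressions, $\lambda_{a\cdot b}=\lambda_a\lambda_b$ for all $a,b$ holds if and only if $a\star(b\star c)=1$ for all $a,b,c$. By associativity this says every generator of $P_3(A)$ is trivial, i.e.\ $P_3(A)=\{1\}$. This proves 2)$\Leftrightarrow$3).

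The final claim follows immediately: Proposition~\ref{app:cyclic-abelianization} already shows $P_3(G)=\{1\}$ for each structure $\bullet_z$, so these are bi-skew by 3)$\Rightarrow$1).

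The main obstacle is justifying 1)$\Leftrightarrow$2) cleanly. If the cited criterion is stated with $\lambda$ valued in $\Aut(A^{\bullet})$ or in an anti-homomorphism form, I would re-derive it by expanding both brace axioms as above. In that derivation the key point is that the $\bullet$-inverse terms cancel because $\lambda$ is already a morphism on $A^{\bullet}$.
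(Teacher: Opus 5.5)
Your proof of 2)$\Leftrightarrow$3) is the same computation as in the paper and is correct. The gap is in 1)$\Leftrightarrow$2). The criterion you quote is misstated, and your claim that it holds "without any use of the standing hypotheses" is false.

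Carry out the expansion you sketch. Since $\overline{a}=\lambda_a^{-1}(a)^{-1}$ and $\lambda_{\overline a}=\lambda_a^{-1}$, one gets $\overline{a}\bullet(a\cdot c)=\lambda_a^{-1}(c)$. The axiom $a\cdot(b\bullet c)=(a\cdot b)\bullet\overline{a}\bullet(a\cdot c)$ then reads $a\cdot b\cdot\lambda_b(c)=a\cdot b\cdot\lambda_{a\cdot b}\lambda_a^{-1}(c)$, that is, $\lambda_{a\cdot b}=\lambda_b\lambda_a$. So bi-skewness is equivalent to $\lambda\colon A^{\cdot}\to\Aut(A^{\cdot})$ being an \emph{anti}homomorphism, not a homomorphism. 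This antihomomorphism form is the criterion the paper invokes.

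A concrete counterexample to your version is the almost trivial skew brace on $S_3$. It is bi-skew, because reversing it gives the almost trivial skew brace on $S_3^{\mathrm{op}}$. There $\lambda_a$ is $c\mapsto a^{-1}ca$, which satisfies $\lambda_{ab}=\lambda_b\lambda_a$. It does not satisfy $\lambda_{ab}=\lambda_a\lambda_b$, since that would make $\mathrm{Inn}(S_3)$ abelian.

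What you are missing is the step that uses the standing hypotheses to identify homomorphisms with antihomomorphisms: under those hypotheses $\lambda_{a\cdot b}=\lambda_{b\cdot a}$ for all $a,b$. This is already visible in your own formula $\lambda_{a\cdot b}(c)=(a\star c)\cdot(b\star c)\cdot c$, which is symmetric in $a$ and $b$ because $\star$-products are central. The paper obtains it differently, from $D=[A^{\cdot},A^{\cdot}]\subseteq\ker(\lambda)\cap\Fix(\lambda)$: write $a\cdot b=b\cdot a\cdot d$ with $d\in D$, and use $\lambda_{x\cdot d}=\lambda_{x\bullet d}=\lambda_x$.

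With this symmetry, $\lambda_{a\cdot b}=\lambda_b\lambda_a$ for all $a,b$ holds if and only if $\lambda_{b\cdot a}=\lambda_b\lambda_a$ for all $a,b$, which gives 1)$\Leftrightarrow$2). The rest of your argument, including the final claim about the structures $\bullet_z$, then goes through. Your fallback in the last paragraph does not close the gap: re-deriving the criterion produces exactly the antihomomorphism form, and you would still need this symmetry step.
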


\begin{proof}
From \eqref{eq:distrright} and $A\star A\subseteq Z(A^{\cdot})$ we get
\[
\lambda_{a\cdot b}(c)=((a\cdot b)\star c)\cdot c=(a\star c)\cdot(b\star c)\cdot c=(b\star c)\cdot(a\star c)\cdot c.
\]
Thus, we have 
\[
\lambda_{a}\lambda_{b}(c)=\lambda_{a}(b\star c)\cdot\lambda_{a}(c)=(a\star(b\star c))\cdot(b\star c)\cdot(a\star c)\cdot c=\bigl(a\star(b\star c)\bigr)\cdot\lambda_{a\cdot b}(c).
\]
It follows that $\lambda:A^{\cdot}\to\Aut(A^{\cdot})$ is a morphism of groups if and
only if all triple $\star$-products are $1$. Therefore, 2) and 3) are equivalent.

Since $D\subseteq\Ker(\lambda)\cap\Fix(\lambda)$, we have
$a\cdot d=a\bul d$ and $\lambda_{a\cdot d}=\lambda_a$ for $d\in D$ and $a\in A$. Since $A^{\cdot}/D$ is abelian we know that $(a\cdot b)D=(b\cdot a)D$, hence there exists $d\in D$ such that $a\cdot b=b\cdot a\cdot d$. Thus $\lambda_{a\cdot b}=\lambda_{b\cdot a}$ and 2) and 3) are also equivalent to $\lambda:A^{\cdot}\to\Aut(A^{\cdot})$ is a group antihomomorphism. But this is equivalent to $A$ is bi-skew by \cite[Theorem~2.6]{ST}, so 2) and 3) are equivalent to 1).
\end{proof}

\begin{remark}
For an arbitrary skew brace, the equivalence between the homomorphism
condition on $\lambda$ and $A\star A\subseteq\Ker(\lambda)$ is
\cite[Theorem~3.13]{ST}. The preceding corollary uses the coincidence
of the power series and the annihilation of $D$ to express both that
criterion and the bi-skew criterion by the same condition $P_3(A)=\{1\}$.
\end{remark}

\noindent\textbf{Acknowledgments.} The author is member of the 
GNSAGA-INdAM and is supported by a postdoctoral fellowship at the Université libre de Bruxelles within the framework of the PDR project “Reconstruction of modules and algebraic objects from closed and monoidal structures on their representation categories” funded by the FNRS under the grant number T.0318.25F (PI Joost Vercruysse).

\noindent\textbf{Use of AI}. Claude Opus 5 (Anthropic) was used in an initial exploratory phase and suggested Proposition \ref{prop:quasi}. ChatGPT-6 (OpenAI) was used to finalise the proof of Theorem \ref{thm:extension} and to improve the presentation of the content. The author verified all AI-assisted material and takes full responsibility for the final text.

\end{document}